\newcommand{\pr}{\mathbb{P}}
\newcommand{\Ol}{\mathcal{O}}
\newcommand{\ra}{\rightarrow}
\begin{document}

\newtheorem{theorem}{Theorem}[section]
\newtheorem{lemma}[theorem]{Lemma}
\newtheorem{definition}[theorem]{Definition}
\newtheorem{proposition}[theorem]{Proposition}
\newtheorem{corollary}[theorem]{Corollary}
\newtheorem{remark}[theorem]{Remark}

\title{Representability of cohomological functors over extension fields}
\author{\bf{Alice Rizzardo} }
\subjclass[2010]{18E30,14F05} 
\keywords{representability, base extension, Fourier-Mukai}
\maketitle

\begin{abstract}
We generalize a result of Orlov and Van den Bergh on the representability of a cohomological functor  $H:D^{b}_{\mathrm{Coh}}(X)\to \underline{\mathrm{mod}}_{L}$ to the case where $L$ is a field extension of the base field $k$ of the variety $X$, with $\text{trdeg}_k L\leq 1$ or $L$ purely transcendental of degree 2.

This result can be applied to investigate the behavior of an exact functor $F:D^{b}_{\mathrm{Coh}}(X)\to D^{b}_{\mathrm{Coh}}(Y)$ with $X$ and $Y$ smooth projective varieties and $\text{dim } Y\leq 1$ or $Y$ a rational surface. We show that for any such $F$ there exists a ``generic kernel'' $A$ in $D^{b}_{\mathrm{Coh}}(X\times Y)$, such that $F$ is isomorphic to the Fourier-Mukai transform with kernel $A$ after composing both with the pullback to the generic point of $Y$.
\end{abstract}

%%%%%%%%%%%%%%%%%%%%%%%%%%%%%%%%%%%%%%%%%%%%%
%%%%%%%%%%%%%%%%%%%%%%%%%%%%%%%%%%%%%%%%%%%%%
\section{Introduction}%%%%%%%%%%%%%%%%%%%%%%%%
%%%%%%%%%%%%%%%%%%%%%%%%%%%%%%%%%%%%%%%%%%%%%
%%%%%%%%%%%%%%%%%%%%%%%%%%%%%%%%%%%%%%%%%%%%%

Let $X$ be a smooth projective variety over an algebraically closed field $k$.  In this paper we will generalize a result of Orlov and Van den Bergh  \cite[Lemma 2.14]{failure} on the representability of a functor $H:D^{b}_{\mathrm{Coh}}(X)\to \underline{\mathrm{mod}}_{k}$ to the case of an extension field $k \subset L$:
\begin{theorem}\label{ebounded}
Let $X$ be a smooth projective variety over a field $k$. Let $L$ be a finitely generated separable field extension of $k$ with $\text{trdeg}_{k}L\leq 1$, or a purely transcendental field extension of transcendence degree 2 over $k$. Consider a contravariant, cohomological, finite type functor
$$H:D^{b}_{\mathrm{Coh}}(X)\ra \underline{\mathrm{mod}}_{L}$$
Then $H$ is representable by an object $E\in D^{b}_{\mathrm{Coh}}(X_{L})$, i.e. there exists $E$ such that for every $C\in D^{b}_{\mathrm{Coh}}(X)$ we have
$$H(C)=\mathrm{Mor}_{D^{b}_{\mathrm{Coh}}(X_{L})}(j^{*}C,E)$$
where $j^{*}:X_{L}\to X$ is the base change morphism.
\end{theorem}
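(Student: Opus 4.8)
The plan is to reduce the statement to the representability theorem of Orlov and Van den Bergh over the ground field $k$ by a spreading-out argument, the point being that the hypotheses on $L$ force the base of the spreading-out to be at most a curve, or a rational surface, over which the relevant gluing can be carried out. First I would choose a smooth integral $k$-variety $B$ with function field $k(B)=L$: here the separability assumption (when $\mathrm{trdeg}_kL\le 1$) and the purely transcendental hypothesis (in degree $2$, so that $B=\mathbb{A}^2$) guarantee that such a smooth model exists and that $X_L$ is again smooth and projective over $L$. Writing $X_L$ as the generic fibre of $p:X\times B\ra B$ and recalling that $D^b_{\mathrm{Coh}}(X_L)=\mathrm{colim}_{U}\,D^b_{\mathrm{Coh}}(X\times U)$ over the affine opens $U=\mathrm{Spec}\,R\subseteq B$, producing the representing object $E$ amounts to producing, for some such $U$, a relative object $\mathcal{E}\in D^b_{\mathrm{Coh}}(X\times U)$ and restricting it to the generic point.

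The finite type hypothesis means that $H$ is pinned down by its value on a fixed compact generator $G$ of $D^b_{\mathrm{Coh}}(X)$ together with finitely many structure maps over the graded ring $\mathrm{Mor}(G,G[\ast])$; since this is finite data it is defined over some finitely generated subalgebra $R\subseteq L$ with $\mathrm{Frac}(R)=L$. Thus, after possibly shrinking $U$, I can spread $H$ out to a contravariant cohomological functor $\mathcal{H}:D^b_{\mathrm{Coh}}(X)\ra \underline{\mathrm{mod}}_{R}$ with $\mathcal{H}\otimes_{R}L\cong H$, the target being finitely generated $R$-modules (finiteness being ensured by properness of $X\times U$ over $U$). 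Because $k$ is algebraically closed, every closed point $b\in U$ has residue field $k$, so each $\mathcal{H}_{b}:=\mathcal{H}\otimes_{R}k(b):D^b_{\mathrm{Coh}}(X)\ra\underline{\mathrm{mod}}_{k}$ is a contravariant cohomological functor of finite type to which the theorem of Orlov and Van den Bergh \cite[Lemma 2.14]{failure} applies: each $\mathcal{H}_{b}$ is represented by an object $E_{b}\in D^b_{\mathrm{Coh}}(X)$.

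The task is then to assemble the pointwise representing objects $E_{b}$ into a single relative object $\mathcal{E}\in D^b_{\mathrm{Coh}}(X\times U)$ whose fibre at each $b$ is $E_{b}$ and which relatively represents $\mathcal{H}$, in the sense that $Rp_{\ast}R\mathcal{H}om(p^{\ast}(-),\mathcal{E})\cong\mathcal{H}$. Once $\mathcal{E}$ is in hand, its restriction $E$ to the generic fibre $X_L$ gives a natural map $\mathrm{Mor}_{D^b_{\mathrm{Coh}}(X_L)}(j^{\ast}C,E)\ra H(C)$, and to see it is an isomorphism of functors it suffices, by a cohomological five-lemma applied to the triangles building an arbitrary $C$ out of $G$, to check it on $G$ and its shifts, where it holds by the defining property of $\mathcal{E}$.

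The main obstacle is exactly this passage from the fibrewise representing objects to a global relative object $\mathcal{E}$ over $U$. The $E_{b}$ are only determined up to (non-canonical) isomorphism, so rigidifying them and gluing the resulting local relative objects across $U$ is governed by obstruction groups that grow with $\dim U$: one needs the vanishing of certain higher $\mathrm{Ext}$ groups and descent obstructions on $X\times U$. When $\dim U=\mathrm{trdeg}_kL\le 1$ the base is a curve and these obstructions vanish for dimension reasons; when $\dim U=2$ one exploits that $U$ is an open subset of $\mathbb{A}^2$, hence rational, to force the same vanishing. Controlling these obstruction groups, and thereby justifying the gluing precisely under the stated hypotheses on $L$, is where the real work of the proof lies.
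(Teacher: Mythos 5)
Your approach is genuinely different from the paper's, but it has two gaps that I believe are fatal as written. The first is the spreading-out of the functor itself. You assert that the finite type hypothesis pins $H$ down by ``its value on a fixed compact generator $G$ together with finitely many structure maps over $\mathrm{Mor}(G,G[\ast])$,'' and hence that $H$ descends to a cohomological functor $\mathcal{H}$ over a finitely generated subalgebra $R\subset L$. But finite type only bounds $\sum_n \dim_L H(G[n])$; it does not present the functor --- its values on \emph{all} objects and \emph{all} morphisms, subject to naturality and the long-exact-sequence constraints --- by a finite amount of algebraic data. If a cohomological finite type functor were determined by its restriction to $\{G[n]\}_n$ as a module over the graded endomorphism algebra of $G$, then representability would follow almost immediately; this determination is essentially what representability theorems prove, so assuming it here is close to circular. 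Moreover, even granting a spread-out $\mathcal{H}$ valued in finitely generated $R$-modules, the specializations $\mathcal{H}\otimes_R k(b)$ need not be cohomological: $-\otimes_R k(b)$ is only right exact, so the long exact sequences acquire $\mathrm{Tor}$ correction terms after specialization. Thus the fibrewise input to Orlov--Van den Bergh is not actually available. (A smaller issue: the theorem does not assume $k$ algebraically closed, so closed points of $U$ need not have residue field $k$.)

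The second gap is the one you flag yourself: assembling the fibrewise objects $E_b$ into a relative object $\mathcal{E}\in D^{b}_{\mathrm{Coh}}(X\times U)$ is deferred as ``where the real work lies,'' with only an appeal to obstructions vanishing ``for dimension reasons'' on a curve or by rationality of $\mathbb{A}^2$. This is not a routine step. The $E_b$ are determined only up to non-unique isomorphism and come with no compatibility data over $U$ at all (your $\mathcal{H}$ is a functor on $D^{b}_{\mathrm{Coh}}(X)$ valued in $R$-modules, not anything sheafy over $U$), and gluing objects in triangulated categories from pointwise or local data is notoriously delicate precisely because triangulated categories do not satisfy descent; no identification of the relevant obstruction groups, nor proof of their vanishing, is given. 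It is worth noting that the paper uses the hypotheses on $L$ in a completely different and concrete way: Brown representability (applied to the Kan extension of the dualized functor) produces an object of $D(\mathrm{QCoh}(X))$ equipped with an $L$-action, and the heart of the paper is a strictification result --- the $L$-action is lifted to an honest object of $D_{\mathrm{QCoh}}(X_L)$ by an explicit cone construction adjoining one transcendental variable at a time, with the finite separable part of the extension handled by splitting idempotents using Karoubianness of $D^{b}_{\mathrm{Coh}}(X_L)$; boundedness and coherence of the representing object are then extracted via Beilinson's resolution of the diagonal. The restrictions $\mathrm{trdeg}_k L\le 1$ (separable) or $L$ purely transcendental of degree $2$ are exactly the cases this lifting argument can reach, not a constraint coming from any gluing obstruction over a base.
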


An interesting example of a functor as in Theorem \ref{ebounded} can be obtained from an exact functor $F:D^{b}_{Coh}(X)\to D^{b}_{\mathrm{Coh}}(Y)$ between the bounded derived categories of two smooth projective varieties $X$ and $Y$, where $\text{dim } Y\leq 1$ or $Y$ is a rational surface. To produce a functor as in the above theorem, we compose $F$ with the pullback to the generic point of $Y$, take cohomology, then dualize to get a contravariant functor:

$$\xymatrix{
D^{b}_{\mathrm{Coh}}(X) \ar@/_2
pc/ [rrrr]_{H}  \ar[r]^-{F} & D^{b}_{\mathrm{Coh}}(Y) \ar[r]^-{i^{*}} & D^{b}_{\mathrm{Coh}}(\eta) \ar[r]^{H^{0}} & \underline{\mathrm{mod}}_{K(Y)} \ar[r]^{D} &\underline{\mathrm{mod}}_{K(Y)}
}$$

Theorem \ref{ebounded} will thus allow us to tackle the question of whether a functor between the bounded derived categories of two smooth projective varieties is representable by a Fourier-Mukai transform. When $\text{dim }Y\leq 1$ or $Y$ is a rational surface we can answer positively to the question above after restricting to the generic point of $Y$: 

\begin{theorem}\label{FM}
Let $X$, $Y$ be smooth projective varieties over a field $k$, where $\dim Y\leq 1$ or $Y$ is a rational surface. Consider a covariant exact functor
$$F: D^{b}_{\mathrm{Coh}}(X) \rightarrow D^{b}_{Coh}(Y)$$
let $i: \eta\ra Y$ the inclusion of the generic point of $Y$. Then there exists an object $A\in D^{b}_{\mathrm{Coh}}(X\times Y)$ such that 
$$ i^{*}\circ F =  i^{*}\circ \Phi_{A},$$
where $\Phi_{A}(\cdot):=Rp_{2*}(A\stackrel{L}{\otimes} Lp_{1}^{*}(\cdot))$ is the Fourier-Mukai trasform with kernel $A$ and $p_{1}: X\times Y\to X$, $p_{2}:X\times Y\to Y$ are the projection morphisms.
\end{theorem}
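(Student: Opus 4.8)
The plan is to run the strategy sketched in the introduction and then reconstruct a genuine kernel from the representing object. First I set $L = K(Y)$ and form the composite contravariant functor $H = D\circ H^{0}\circ i^{*}\circ F : D^{b}_{\mathrm{Coh}}(X)\ra \underline{\mathrm{mod}}_{L}$. Since $\dim Y\leq 1$ forces $\mathrm{trdeg}_{k}K(Y)\leq 1$, and $Y$ a rational surface forces $K(Y)$ purely transcendental of degree $2$, the field $L$ satisfies exactly the hypotheses of Theorem \ref{ebounded} (separability being automatic since $Y$ is smooth over $k$). I would check that $H$ is cohomological (it is a composite of the exact functors $F$, $i^{*}$, $D$ with the cohomological $H^{0}$) and of finite type (because $F(C)\in D^{b}_{\mathrm{Coh}}(Y)$ gives $i^{*}F(C)\in D^{b}_{\mathrm{Coh}}(\eta)$, a bounded complex of finite-dimensional $K(Y)$-spaces). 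Theorem \ref{ebounded} then yields $E\in D^{b}_{\mathrm{Coh}}(X_{L})$ representing $H$, where one identifies $X_{L}=X\times_{k}\mathrm{Spec}\,K(Y)$ with the generic fibre $(X\times Y)_{\eta}$ of $p_{2}$, and $j:X_{L}\ra X$ with $p_{1}$ restricted to this fibre.

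Next I would spread $E$ out to a kernel. Writing $X_{L}=\varprojlim_{U}(X\times U)$ over dense opens $U\subseteq Y$, the standard limit formalism gives $D^{b}_{\mathrm{Coh}}(X_{L})=\varinjlim_{U}D^{b}_{\mathrm{Coh}}(X\times U)$, so any object of $D^{b}_{\mathrm{Coh}}(X_{L})$ extends to $D^{b}_{\mathrm{Coh}}(X\times U)$ for some $U$, and thence (by any coherent extension) to $D^{b}_{\mathrm{Coh}}(X\times Y)$. A short duality computation dictates which object to extend: with $d=\dim X$, the generic fibre of the sought kernel should be $E^{\vee}\stackrel{L}{\otimes}\omega_{X_{L}}[d]$, and I define $A\in D^{b}_{\mathrm{Coh}}(X\times Y)$ as a coherent extension of this object.

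For the matching, since $i:\eta\ra Y$ is flat, flat base change gives $i^{*}\Phi_{A}(C)=Rq_{*}\bigl(A|_{X_{L}}\stackrel{L}{\otimes}Lj^{*}C\bigr)$ with $q:X_{L}\ra\mathrm{Spec}\,K(Y)$ the structure map. Using that $Lj^{*}C$ is perfect ($X$ being smooth) and applying Grothendieck--Serre duality for $q$, I would rewrite
$$i^{*}\Phi_{A}(C)=Rq_{*}\bigl(E^{\vee}\stackrel{L}{\otimes}\omega_{X_{L}}[d]\stackrel{L}{\otimes}Lj^{*}C\bigr)\cong D\bigl(Rq_{*}(E\stackrel{L}{\otimes}Lj^{*}(C^{\vee}))\bigr),$$
and then identify $H^{n}\bigl(Rq_{*}(E\stackrel{L}{\otimes}Lj^{*}(C^{\vee}))\bigr)=\mathrm{Mor}_{X_{L}}\bigl(j^{*}(C^{\vee})[-n],E\bigr)=H(C^{\vee}[-n])=D\bigl(H^{-n}(i^{*}F(C^{\vee}))\bigr)$ via the representability of $H$ together with the shift $C\mapsto C[n]$. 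Reassembling over all $n$ and applying $D$ once more returns $i^{*}F(C)$, giving $i^{*}\Phi_{A}(C)\cong i^{*}F(C)$.

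The main obstacle is precisely this last reassembly. Theorem \ref{ebounded} only controls the degree-zero group $\mathrm{Mor}(j^{*}(-),E)$, so recovering the \emph{entire} complex $i^{*}F(C)$ in every cohomological degree, and doing so \emph{functorially} in $C$, is the delicate point. The degree-wise recovery I obtain by feeding the shifts $C[n]$ into the representability isomorphism and then using that every object of $D^{b}_{\mathrm{Coh}}(\eta)$ is formal over the field $K(Y)$ (a complex of $K(Y)$-vector spaces splits into its cohomology), so that matching all $H^{n}$ forces an isomorphism of objects. Functoriality then follows from the naturality of the representability isomorphism together with the naturality of Grothendieck--Serre duality and of the perfect-object pairing used above. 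I expect the care needed to make the spreading-out compatible with these natural identifications, and to confirm that the formality splitting can be made natural enough to upgrade the object-wise isomorphism to an isomorphism of functors $i^{*}\circ F\cong i^{*}\circ\Phi_{A}$, to be the technical heart of the argument.
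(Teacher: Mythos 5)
Your proposal follows the paper's own architecture almost exactly: the same functor $H=D\circ H^{0}\circ i^{*}\circ F$, the same appeal to Theorem \ref{ebounded} to obtain $E\in D^{b}_{\mathrm{Coh}}(X_{K(Y)})$, the same kernel candidate $E^{\vee}\otimes\omega_{X_{K(Y)}}[\dim X]$, the same use of Serre duality to match cohomology in degree zero, and the same finish (shifts plus exactness of $F$, then formality of complexes over the field $K(Y)$). The one point of genuine divergence is the construction of $A$: you invoke the limit formalism $D^{b}_{\mathrm{Coh}}(X_{K(Y)})=\varinjlim_{U}D^{b}_{\mathrm{Coh}}(X\times U)$ over dense opens $U\subseteq Y$ and then extend coherently from $X\times U$ to $X\times Y$, whereas the paper builds $A$ by hand: it twists $E^{\vee}\otimes\omega_{X_{K(Y)}}[\dim X_{K(Y)}]$ by $g^{*}\mathcal{L}^{\otimes n}$ until it is globally generated in each degree, pushes forward along $g:X_{K(Y)}\to X\times Y$ (which is exact because $g$ is affine), takes the coherent subcomplex generated in each degree by the chosen generators together with the images of the generators one degree down under the differential, and twists back by $\mathcal{L}^{-n}$. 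Your route is shorter but rests on standard, nontrivial approximation and extension results; the paper's construction is self-contained and produces $A$ with $Lg^{*}A\cong E^{\vee}\otimes\omega_{X_{K(Y)}}[\dim X_{K(Y)}]$ directly. Your worry that the object-wise isomorphism must be upgraded to an isomorphism of functors is legitimate, but note that the paper does not do more than you do: it also concludes from degreewise agreement of cohomology together with the equivalence of $D^{b}_{\mathrm{Coh}}(K(Y))$ with graded $K(Y)$-vector spaces.

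One bookkeeping correction in your duality chain: the duals are misplaced. From $E\otimes Lj^{*}(C^{\vee})\cong\underline{\mathrm{RHom}}(Lj^{*}C,E)$ (using that $Lj^{*}C$ is perfect) one gets $H^{n}\bigl(Rq_{*}(E\otimes Lj^{*}(C^{\vee}))\bigr)=\mathrm{Mor}_{X_{K(Y)}}\bigl(j^{*}C[-n],E\bigr)=H(C[-n])=D\bigl(H^{-n}(i^{*}F(C))\bigr)$, not $\mathrm{Mor}(j^{*}(C^{\vee})[-n],E)=H(C^{\vee}[-n])$. As you wrote it, the reassembly would output information about $i^{*}F(C^{\vee})$ rather than $i^{*}F(C)$, and the final claim would not follow; with the corrected identification the computation closes exactly as in the paper, which performs the same duality at the level of $\mathrm{Mor}$-groups via $\mathrm{Mor}(\mathcal{O}_{X_{K(Y)}},E^{\vee}\otimes\omega_{X_{K(Y)}}[\dim X_{K(Y)}]\otimes j^{*}C)=\mathrm{Mor}(E,\omega_{X_{K(Y)}}[\dim X_{K(Y)}]\otimes j^{*}C)=D\,\mathrm{Mor}(j^{*}C,E)$.
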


For some results generalizing the results in Section \ref{base change section}, see \cite{scalar}.

\subsection*{Acknowledgements}This paper is derived from part of my PhD thesis. I would like to sincerely thank my thesis advisor Aise Johan de Jong for suggesting this problem as well as providing invaluable guidance over the years. I am grateful to Michel Van den Bergh for many helpful comments and suggestions. Finally, I would like to thank MSRI for its hospitality during the Noncommutative Algebraic Geometry and Representation Theory semester, when the final draft of this paper was completed.

%%%%%%%%%%%%%%%%%%%%%%%%%%%%%%%%%%%%%%%%%%%%%
%%%%%%%%%%%%%%%%%%%%%%%%%%%%%%%%%%%%%%%%%%%%%
\section{The Base Change Category}%%%%%%%%%%%
%%%%%%%%%%%%%%%%%%%%%%%%%%%%%%%%%%%%%%%%%%%%%
%%%%%%%%%%%%%%%%%%%%%%%%%%%%%%%%%%%%%%%%%%%%%

In what follows, an abelian category $\mathcal{A}$ does not automatically have any limits or colimits apart from the finite ones. 

Given a field $K$, we will denote with $\underline{\mathrm{mod}}_{K}$ the category of finite dimensional $K$-vector spaces, whereas $\underline{\mathrm{Mod}}_{K}$ will denote the category of possibly infinite-dimensional $K$-vector spaces. $D(\mathcal{A})$ will denote the derived category of an abelian category $\mathcal{A}$.

Given an $R$-linear abelian category $\mathcal{A}$ and an inclusion of rings $R\hookrightarrow S$, we can define the base change category $\mathcal{A}_{S}$ as in \cite[\textsection 4]{deformation}:
\begin{definition}
The category $\mathcal{A}_{S}$ is given by pairs $(C,\rho_{C})$ where $C\in \mathrm{Ob}(\mathcal{A})$ and $\rho_{C}:S\to \mathrm{Hom}_{\mathcal{A}}(C,C)$ is an $R$-algebra map such that the composition $R\to S \to \mathrm{Hom}_{\mathcal{A}}(C,C)$ gives back the $R$-algebra structure on $\mathcal{A}$. The morphisms in $\mathcal{A}_{S}$ are the morphisms in $\mathcal{A}$ compatible with the $S$-structure.
\end{definition}

\begin{definition}\label{tensoring}
For  each element $C\in \mathcal{A}$, the functor
$$C \otimes_{R}-: \underline{\mathrm{mod}}(R)\to \mathcal{A}$$
is the unique finite colimit preserving functor with $C\otimes R=C$.

This gives for each finitely presented $R$-algebra $S$ a functor
$$-\otimes S:\mathcal{A} \ra \mathcal{A}_{S}$$
to the base change category $\mathcal{A}_{S}$.
\end{definition}

\begin{proposition}\cite[Proposition 4.3]{deformation}\label{tensor adjointness}
The functor $-\otimes S$ is left adjoint to the forgetful functor
\begin{align*}
\mathrm{Forget}:\mathcal{A}_{S} &\ra \mathcal{A}\\
(C,\rho_{C}) &\mapsto C
\end{align*}
\end{proposition}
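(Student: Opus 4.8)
The plan is to exhibit, naturally in $C\in\mathcal{A}$ and $(D,\rho_D)\in\mathcal{A}_S$, a bijection
\[
\mathrm{Hom}_{\mathcal{A}_S}(C\otimes S,(D,\rho_D))\;\cong\;\mathrm{Hom}_{\mathcal{A}}(C,D),
\]
so that $-\otimes S$ is left adjoint to $\mathrm{Forget}$. The whole argument is modeled on the classical adjunction between extension and restriction of scalars for modules, with the universal property of $C\otimes_R-$ from Definition \ref{tensoring} standing in for the usual constructions.

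First I would record the basic ``hom–tensor'' identity: for every finitely presented $R$-module $M$ and every $C,D\in\mathcal{A}$, there is a natural isomorphism
\[
\mathrm{Hom}_{\mathcal{A}}(C\otimes_R M,D)\;\cong\;\mathrm{Hom}_R\!\big(M,\mathrm{Hom}_{\mathcal{A}}(C,D)\big)\qquad(\ast)
\]
To prove $(\ast)$ I would view both sides as contravariant functors of $M\in\underline{\mathrm{mod}}(R)$ valued in abelian groups. Since $C\otimes_R-$ preserves finite colimits and $\mathrm{Hom}_{\mathcal{A}}(-,D)$ turns colimits into limits, the left-hand side sends finite colimits in $M$ to finite limits; the right-hand side $\mathrm{Hom}_R(-,\mathrm{Hom}_{\mathcal{A}}(C,D))$ does the same. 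Both functors agree on $M=R$, where they return $\mathrm{Hom}_{\mathcal{A}}(C,D)$. Because every finitely presented module is a finite colimit (a cokernel of a map $R^m\to R^n$) of copies of $R$, a contravariant functor converting finite colimits into finite limits is determined by its value on $R$; hence the two functors agree, giving $(\ast)$.

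Next I would specialize $(\ast)$ to $M=S$ and carve out the $S$-linear morphisms on each side. The $S$-action $\rho_{C\otimes S}$ is, by construction, obtained by applying $C\otimes_R-$ to the left regular representation $S\to\mathrm{End}_R(S)$, so $\rho_{C\otimes S}(t)=C\otimes m_t$ where $m_t$ is multiplication by $t$. Tracing $(\ast)$, a morphism $g\colon C\otimes S\to D$ corresponds to $\psi_g\colon S\to\mathrm{Hom}_{\mathcal{A}}(C,D)$, $\psi_g(s)=g\circ(C\otimes\iota_s)$, where $\iota_s\colon R\to S$ sends $1\mapsto s$. From $\iota_{ts}=m_t\circ\iota_s$ one reads off $\psi_g(ts)=g\circ\rho_{C\otimes S}(t)\circ(C\otimes\iota_s)$, so the $S$-linearity of $g$ (namely $g\circ\rho_{C\otimes S}(t)=\rho_D(t)\circ g$) is equivalent, via the bijection $(\ast)$, to the $S$-linearity of $\psi_g$ for the $S$-module structure on $\mathrm{Hom}_{\mathcal{A}}(C,D)$ given by $t\cdot h=\rho_D(t)\circ h$. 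Therefore $(\ast)$ restricts to
\[
\mathrm{Hom}_{\mathcal{A}_S}(C\otimes S,(D,\rho_D))\;\cong\;\mathrm{Hom}_S\!\big(S,\mathrm{Hom}_{\mathcal{A}}(C,D)\big)\;=\;\mathrm{Hom}_{\mathcal{A}}(C,D),
\]
the last equality because an $S$-linear map out of the free module $S$ is determined by the image of $1$. Naturality in $C$ and in $(D,\rho_D)$ is inherited from the naturality in $(\ast)$ and is routine; the unit $C\to C\otimes_R S$ is induced by the unit $R\to S$, and the counit $D\otimes S\to D$ is the image of $\rho_D$ under $(\ast)$.

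The main obstacle I anticipate is the honest justification of $(\ast)$ together with the interpretation of $C\otimes S$ when $S$ is only a finitely presented $R$-\emph{algebra} rather than a finitely presented $R$-\emph{module}: the functor $C\otimes_R-$ of Definition \ref{tensoring} is a priori defined on $\underline{\mathrm{mod}}(R)$, and one must check that it remains well defined and finite-colimit-preserving on the class of $S$ under consideration (and that $\mathcal{A}$ carries whatever finite colimits the presentation of $S$ demands). Once the object $C\otimes S$ and the identity $(\ast)$ are secured, the remainder is the formal bookkeeping of $S$-linearity sketched above.
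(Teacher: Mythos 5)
Your core argument is correct, but note first that the paper contains no proof of this proposition at all: the statement is simply quoted from the external reference cited in its header (Proposition 4.3 of the deformation-theory paper), so there is no internal proof to compare against, and your write-up is best read as a reconstruction of the cited proof. What you do is the standard extension/restriction-of-scalars adjunction argument, and it is surely the intended one: establish the hom--tensor identity $(\ast)$ by reducing along a finite presentation $R^m \to R^n \to M \to 0$ (both sides turn finite colimits in $M$ into finite limits, and your comparison map $g \mapsto \psi_g$ is natural and is an isomorphism on finite free modules, hence on all finitely presented $M$ by left exactness), then check that under $(\ast)$ the morphisms $C \otimes S \to D$ in $\mathcal{A}_S$ correspond exactly to the $S$-linear maps $S \to \mathrm{Hom}_{\mathcal{A}}(C,D)$, and finally evaluate at $1 \in S$. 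Your bookkeeping here --- $\psi_g(ts) = g \circ \rho_{C\otimes S}(t)\circ (C\otimes \iota_s)$, together with injectivity of $(\ast)$ to deduce $S$-linearity of $g$ from $S$-linearity of $\psi_g$ --- is exactly right, as is the identification of unit and counit.

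The obstacle you flag at the end is genuine, but it is a wrinkle in the paper's own Definition \ref{tensoring} rather than a defect of your argument: there $C \otimes_R -$ is constructed only on finitely presented \emph{modules}, while a finitely presented $R$-\emph{algebra} $S$ (e.g.\ $R[T]$) need not be finitely presented as a module, so neither the object $C\otimes S$ nor the identity $(\ast)$ at $M=S$ is available from that definition alone. Your proof is therefore complete whenever $S$ is finitely presented as an $R$-module. For the general case --- which is the one this paper actually needs later, since the base changes of interest are field extensions such as $K=k(T)$ --- the resolution is precisely the paper's Definition \ref{abelian tensoring} and the Remark following Definition \ref{more tensoring}: assume $\mathcal{A}$ satisfies AB5, write an arbitrary $R$-module $M$ as a filtered colimit of finitely presented ones, and define $C\otimes_R M$ as the corresponding filtered colimit in $\mathcal{A}$. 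Then $(\ast)$ for arbitrary $M$ follows formally from your $(\ast)$ for finitely presented $M$, because $\mathrm{Hom}_{\mathcal{A}}\bigl(\mathop{\mathrm{colim}}_i (C\otimes M_i), D\bigr) = \lim_i \mathrm{Hom}_{\mathcal{A}}(C\otimes M_i, D)$ by the universal property of the colimit, while $\mathrm{Hom}_R(M,-) = \lim_i \mathrm{Hom}_R(M_i,-)$; your $S$-linearity argument then goes through verbatim. So the gap you identified closes by the paper's own mechanism, with no new idea required.
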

Whenever the context is clear, given an object $B\in \mathcal{A}_{S}$, we will still denote by $B$ the corresponding object of $\mathcal{A}$ obtained via the forgetful functor.

For the purposes of this discussion we will need a more general setting for base change - specifically, we need to be able to talk about base change for a bigger category of rings and not just the ones that are finitely presented over the base. Let us extend Definition \ref{tensoring} as follows:

\begin{definition}\label{abelian tensoring}
Let $\mathcal{A}$ be an $R$-linear abelian category satisfying AB5. Using the fact that any $R$-module is the filtered colimit of finitely presented $R$-modules, we can extend definition \ref{tensoring} to the general case of 
$$-\otimes S:\mathcal{A} \ra \mathcal{A}_{S}$$
for any $R$-algebra $S$.
\end{definition}

The notion of base change category can be extended to the case of the derived category $D(\mathcal{A})$ of an abelian $R$-linear category $\mathcal{A}$ in the obvious way:
\begin{definition}
Given an inclusion of rings  $R\hookrightarrow S$, the category $D(\mathcal{A})_{S}$ is given by pairs $(C,\rho_{C})$ where $C\in \mathrm{Ob}(D(\mathcal{A}))$ and $\rho_{C}:S\to \mathrm{Hom}_{D(\mathcal{A})}(C,C)$ is an $R$-algebra map such that the composition $R\to S \to \mathrm{Hom}_{D(\mathcal{A})}(C,C)$ gives back the $R$-algebra structure on $D(\mathcal{A})$. The morphisms in $D(\mathcal{A})_{S}$ are the morphisms in $D(\mathcal{A})$ compatible with the $S$-structure.
\end{definition}

Again, we have a notion of tensor product:
\begin{definition}\label{more tensoring}
Let $R$ be a ring, let $\mathcal{A}$ be an $R$-linear abelian category satisfying AB5, and let $C^{\bullet}$ be a complex of objects in $\mathcal{A}$:
$$C^{\bullet}=\ldots\ra C^{i-1}\xrightarrow{d^{i-1}} C^{i} \xrightarrow{d^{i}} C^{i+1}\ra\ldots$$
Let $S$ be a ring, with a map $R\hookrightarrow S$. Then we can define $C^{\bullet}\otimes S$, as an object of $D(\mathscr{A}_{S})$, as
$$C^{\bullet}\otimes S=\ldots\ra C^{i-1}\otimes S\xrightarrow{d^{i-1}\otimes 1} C^{i}\otimes S \xrightarrow{d^{i}\otimes 1} C^{i+1}\otimes S\ra\ldots$$
\end{definition}
The complex $C^{\bullet}\otimes S$ can also be considered as an object of $D(\mathscr{A})_{S}$.

\begin{remark}
Suppose that $\mathcal{A}$ is a $k$-linear abelian category satisfying AB5 and $k\subset K$ is an extension of fields. In the situation of definition \ref{abelian tensoring} and \ref{more tensoring}, similarly to the case of \ref{tensor adjointness}, it is easy to show that again tensoring with $K$ is left adjoint to the forgetful functor
\begin{itemize}
\item as a functor $\mathcal{A}\to \mathcal{A}_{K}$;
\item as a functor $D(\mathcal{A})\to D(\mathcal{A}_{K})$;
\item as a functor $D(\mathcal{A})\to D(\mathcal{A})_{K}$.
\end{itemize}
\end{remark}

\begin{remark}
Let $R$ be a ring, let $\mathcal{A}$ be an $R$-linear abelian category satisfying AB5, and let $C^{\bullet}$ be a complex of objects in $\mathcal{A}$,
$$C^{\bullet}=\ldots\ra C^{i-1}\xrightarrow{d^{i-1}} C^{i} \xrightarrow{d^{i}} C^{i+1}\ra\ldots$$
Let $S\subset R$ a multiplicative system. In this case $C^{\bullet}\otimes_{R}S^{-1}R$, as an object of $D(\mathscr{A})$, is the same as
$$\ldots\ra \mathop{\mathrm{colim}}_{f\in S}f^{-1}C^{i-1}\xrightarrow{d^{i-1}} \mathop{\mathrm{colim}}_{f\in S}f^{-1}C^{i} \xrightarrow{d^{i}} \mathop{\mathrm{colim}}_{f\in S}f^{-1} C^{i+1}\ra\ldots$$

where $\mathop{\mathrm{colim}}_{f\in S}f^{-1}C^{i}$ is obtained by taking for every $f \in S$ a copy of $C^{i}$
and as morphisms only the maps
$$
f^{-1}\,C^i
\longrightarrow
(fg)^{-1}\,C^i
$$
given by multiplication by $g: C^{i} \to C^{i}$.
\end{remark}

\begin{lemma}\label{tensor}
In the situation of the remark above, if for every element $f\in S$ the multiplication by $f$ is a quasi-isomorphism of $C^{\bullet}$, then the map $$C^{\bullet}\ra C^{\bullet}\otimes_{R}S^{-1}R$$
is a quasi-isomorphism in $D(\mathcal{A})$.
\end{lemma}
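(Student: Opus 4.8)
The plan is to deduce the statement from the exactness of filtered colimits provided by the AB5 hypothesis. By the preceding remark, $C^{\bullet}\otimes_{R}S^{-1}R$ is computed as the filtered colimit, indexed by $S$ (ordered by divisibility), of copies $C^{\bullet}_{f}$ of the complex $C^{\bullet}$, where the transition map from the copy indexed by $f$ to the one indexed by $fg$ is levelwise multiplication by $g$; the natural map $C^{\bullet}\to C^{\bullet}\otimes_{R}S^{-1}R$ is then the structure map of this colimit coming from the index $f=1$. So the whole problem is to understand the effect of this filtered colimit on cohomology.

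First I would invoke the AB5 axiom, which makes filtered colimits exact; an exact functor preserves kernels, cokernels and images, and hence commutes with the formation of cohomology of a complex. Applied to the diagram above, this gives for every $i$ an isomorphism $H^{i}(C^{\bullet}\otimes_{R}S^{-1}R)\cong\mathop{\mathrm{colim}}_{f\in S}H^{i}(C^{\bullet})$, compatible with the map induced by $C^{\bullet}\to C^{\bullet}\otimes_{R}S^{-1}R$, which under this identification becomes the structure map of the colimit from the index $1$. I would then check that every transition map of the resulting diagram of cohomology objects is an isomorphism: the transition map associated to the passage from $f$ to $fg$ is $H^{i}$ of multiplication by $g$, and from the factorization $H^{i}(fg)=H^{i}(f)\circ H^{i}(g)$ together with the hypothesis that $H^{i}(f)$ and $H^{i}(fg)$ are isomorphisms (both $f$ and $fg$ lie in $S$), it follows that $H^{i}(g)$ is an isomorphism as well.

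Finally, a filtered colimit of a diagram all of whose transition maps are isomorphisms is canonically isomorphic, via each of its structure maps, to any single term. Applying this to the term indexed by $1$ shows that the map $H^{i}(C^{\bullet})\to H^{i}(C^{\bullet}\otimes_{R}S^{-1}R)$ is an isomorphism for every $i$, i.e.\ the map in the statement is a quasi-isomorphism. The only place where genuine content enters is the commutation of cohomology with the filtered colimit, which is precisely the AB5 exactness of filtered colimits; the remainder is the elementary observation that a filtered system with invertible transition maps collapses to any one of its terms.
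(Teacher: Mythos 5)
Your proof is correct and follows essentially the same route as the paper's: use AB5 to commute cohomology with the filtered colimit computing $C^{\bullet}\otimes_{R}S^{-1}R$, observe that all transition maps on cohomology are isomorphisms, and conclude that the colimit collapses to the single copy $H^{i}(C^{\bullet})$. Your two-out-of-three argument showing $H^{i}(g)$ is an isomorphism even when only $f$ and $fg$ are assumed to lie in $S$ is a slightly more careful treatment of the transition maps than the paper gives, but the substance is identical.
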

\begin{proof}
Since taking cohomology commutes with directed colimits we have 
$$H^{i}(C^\bullet \otimes_{R} S^{-1}R)= \mathop{\text{colim}}_{f\in S}\ f^{-1}\,H^{i}(C^\bullet)$$
but since multiplication by any $g\in S$ is a quasi-isomorphism we get
$$\xymatrix{
f^{-1}\,H^{i}(C^{\bullet})\ar[r]^-{\cong}_-{g} & (fg)^{-1}\,H^{i}(C^{\bullet})
}$$
hence the cohomology of $C^\bullet \otimes_{R} S^{-1}R$ consists of only one copy of $H^{i}(C^{\bullet})$, and the map $C^{\bullet}\ra C^\bullet \otimes_{R} S^{-1}R$ is a quasi-isomorphism.
\end{proof}

%%%%%%%%%%%%%%%%%%%%%%%%%%%%%%%%%%%%%%%%%%%%%
%%%%%%%%%%%%%%%%%%%%%%%%%%%%%%%%%%%%%%%%%%%%%
\section{A result on base change for derived categories}\label{base change section}%%%
%%%%%%%%%%%%%%%%%%%%%%%%%%%%%%%%%%%%%%%%%%%%%
%%%%%%%%%%%%%%%%%%%%%%%%%%%%%%%%%%%%%%%%%%%%%

The purpose of this section is to analyze the functor $D(\mathcal{A}_{K}) \ra D(\mathcal{A})_{K}$ that sends an object in $D(\mathcal{A}_{K})$ to the same object considered as an object of $D(\mathcal{A})$, together with its $K$-action. Specifically, we will prove the following:

\begin{theorem}\label{ess-sur}
Let $\mathcal{A}$ be a $k$-linear abelian category satisfying AB5, where $k$ is a field. Let $K=k(T)$ or $K=k(T,T')$. Then the functor
\begin{align*}
D(\mathcal{A}_{K}) &\ra D(\mathcal{A})_{K}\\
C^{\bullet} &\mapsto (C^{\bullet},\rho_{C})
\end{align*}
is essentially surjective, where $\rho_{C}:K\to \mathrm{Aut}(C^{\bullet})$ is the obvious map.

Moreover, if $L$ is a finite separable extension of $K=k(T)$ with $L=K(\alpha)=K[T]/P(T)$ then we can lift an object $(C^{\bullet}, \rho_{C})\in D(\mathscr{A})_{L}$ to an object $N^{\bullet}$ of $D(\mathscr{A}_{K})$ endowed with a map $\psi_{\alpha}\in \mathrm{End}(N^{\bullet})$ such that $P(\psi_{\alpha})$ is zero on all cohomology groups, and the action of $\psi_{\alpha}$ on $N^{\bullet}$ corresponds to the action of $\alpha$ on $C^{\bullet}$.
\end{theorem}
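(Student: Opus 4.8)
The plan is to realize the derived $K$-action by a genuine action on chain complexes, treating one transcendence generator at a time, and then to invert the remaining denominators by means of Lemma \ref{tensor}. Two observations drive the argument. First, an object $(C^\bullet,\rho_C)$ of $D(\mathcal{A})_K$ is simply a complex together with a ring homomorphism $\rho_C\colon K\to\mathrm{End}_{D(\mathcal{A})}(C^\bullet)$, so that for every nonzero $p\in k[T]$ the endomorphism $\rho_C(p)$ is an \emph{automorphism} in $D(\mathcal{A})$, since $p$ is invertible in $K$. Second, every base-change category $\mathcal{A}_{k[T]}$, $\mathcal{A}_{k(T)}$, and so on is again AB5 over the appropriate ground ring, so that the same realization and localization procedures remain available after base change.

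For $K=k(T)$ I would first realize $f:=\rho_C(T)$ by an honest chain map, replacing $C^\bullet$ by a complex on which $f$ becomes an actual chain endomorphism (for instance a K-injective resolution $I^\bullet$, for which $\mathrm{End}_{D(\mathcal{A})}(I^\bullet)=\mathrm{End}_{K(\mathcal{A})}(I^\bullet)$). Declaring $p(T)$ to act as $p(f)$ then makes $I^\bullet$ a complex of $k[T]$-modules in $\mathcal{A}$, i.e.\ an object of $D(\mathcal{A}_{k[T]})$ whose image in $D(\mathcal{A})_{k[T]}$ is $(C^\bullet,\rho_C|_{k[T]})$; no commutativity has to be arranged here, since a single endomorphism is being realized. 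I would then form $I^\bullet\otimes_{k[T]}k(T)$, an object of $D(\mathcal{A}_{k(T)})$ by Definitions \ref{abelian tensoring}--\ref{more tensoring}. As every nonzero $p(T)$ acts by the quasi-isomorphism $p(f)$, Lemma \ref{tensor} (with $R=k[T]$ and $S=k[T]\setminus\{0\}$) gives that $I^\bullet\to I^\bullet\otimes_{k[T]}k(T)$ is a quasi-isomorphism; being $k[T]$-linear it becomes $K$-linear after localizing, so its image in $D(\mathcal{A})_K$ is isomorphic to $(C^\bullet,\rho_C)$, proving essential surjectivity in the transcendence degree one case.

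For $K=k(T,T')$ I would use the factorization $k(T,T')=k(T)(T')$ and reapply the previous step over the new ground field $k(T)$ and the new AB5 category $\mathcal{A}':=\mathcal{A}_{k(T)}$. The first step produces $N_1^\bullet\in D(\mathcal{A}')$ realizing the $T$-action, with $C^\bullet\cong\mathrm{Forget}(N_1^\bullet)$ in $D(\mathcal{A})_{k(T)}$. The action $\rho_C(T')$ commutes with $\rho_C(T)$ and is therefore $k(T)$-linear; the crucial point is to promote it to an endomorphism of $N_1^\bullet$ inside $D(\mathcal{A}')$. Once this is done, one further application of the engine over $k(T)$—realizing $T'$ as a chain endomorphism over $\mathcal{A}_{k(T)[T']}$ and then inverting all nonzero elements of $k(T)[T']$ by Lemma \ref{tensor}—yields the lift in $D(\mathcal{A}_{k(T,T')})$. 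Since a morphism in $\mathcal{A}_{k(T)[T']}$ is by definition $k(T)[T']$-linear, the chain-level actions of $T$ and $T'$ automatically commute, giving the honest $k(T,T')$-module structure.

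For the final assertion, with $L=K(\alpha)$ and $K=k(T)$, I would first lift the $K$-structure to $N^\bullet\in D(\mathcal{A}_K)$ by the transcendence degree one case, taking $N^\bullet$ K-injective over $\mathcal{A}_K$. Promoting the $K$-linear endomorphism $\rho_C(\alpha)$ to an endomorphism of $N^\bullet$ in $D(\mathcal{A}_K)$ and choosing a chain-level representative gives $\psi_\alpha\in\mathrm{End}(N^\bullet)$. Because $P(\alpha)=0$ in $L$, the derived endomorphism $\rho_C(P(\alpha))=P(\rho_C(\alpha))$ vanishes, so the chain map $P(\psi_\alpha)$ represents the zero morphism in $D(\mathcal{A}_K)$ and hence is zero on every cohomology group—though not, in general, on the nose, which is precisely why $L$ itself cannot be made to act at the chain level. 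The main obstacle, in every case past the first generator, is exactly this promotion: lifting a derived endomorphism that is linear over the already-realized structure to a genuine endomorphism in the derived category of the base-changed abelian category, equivalently rectifying endomorphisms that commute only up to homotopy to honestly commuting chain maps. It is here that the hypotheses on $L$—transcendence degree at most one, or purely transcendental of degree two—enter, by controlling the obstructions to this rectification; granted the promotion, the remaining localization through Lemma \ref{tensor} is formal.
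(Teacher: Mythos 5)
Your reduction of the problem is sound, and your one-variable argument is essentially the alternative the paper itself mentions right after Lemma \ref{simpler} (realize $\rho_C(T)$ by a chain map on a K-injective replacement, then localize via Lemma \ref{tensor}); note, though, that K-injective resolutions are guaranteed for Grothendieck categories, not for arbitrary AB5 categories as in the statement, whereas the paper's construction avoids them entirely. The genuine gap is at the central point of the argument, and you have named it yourself without filling it: the ``promotion'' of $\rho_C(T')$ (resp.\ $\rho_C(\alpha)$) to an endomorphism of the lift $N_1^\bullet$ \emph{inside} $D(\mathcal{A}_{k(T)})$. A derived endomorphism of $N_1^\bullet$ in $D(\mathcal{A})$ commuting with the $k(T)$-action is precisely an endomorphism in $D(\mathcal{A})_{k(T)}$, and lifting it along the forgetful functor $D(\mathcal{A}_{k(T)})\to D(\mathcal{A})_{k(T)}$ is an instance of the very failure of fullness that makes the theorem nontrivial; asserting that the hypotheses ``control the obstructions to this rectification'' restates what must be proved. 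Granting yourself this step makes the argument circular, and with it the two-variable case and the final statement about $L=K(\alpha)$ remain unproved.

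The paper's proof is organized exactly so as to make this promotion possible, by choosing a very particular lift rather than an arbitrary (e.g.\ K-injective) one: in Lemma \ref{simpler} the lift is built as the cone
$$M^\bullet=\mathrm{Cone}\bigl(Q^\bullet[T]\xrightarrow{\varphi'\otimes 1-u\otimes T}C^\bullet[T]\bigr),$$
so that $M^\bullet$ sits in a distinguished triangle in $D(\mathcal{A}_{k[T]})$ whose first two terms are $C^\bullet[T]$. Since $\psi$ commutes with $\varphi$ in $D(\mathcal{A})$, the map $\psi\otimes 1$ commutes with $\varphi\otimes 1-1\otimes T$ in $D(\mathcal{A}_{k[T]})$, and the triangulated axiom TR3 then produces a fill-in $\tilde\psi\in\mathrm{End}_{D(\mathcal{A}_{k[T]})}(M^\bullet)$ compatible with $\psi$ under the quasi-isomorphism $C^\bullet\to M^\bullet$; only after this does one localize via Lemma \ref{tensor} and, in the two-variable case, run Lemma \ref{simpler} a second time over $\mathcal{A}_{k(T)}$ (Lemmas \ref{simpler2} and \ref{simpler3}). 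This is why the paper remarks that it needs ``this specific form for the complex $M^\bullet$'': the triangle presentation is the mechanism that replaces the promotion you left unproven. To repair your proof you would need to reproduce this cone/TR3 argument, or supply some other rectification device; the localization steps you describe are indeed formal, but they are not where the content lies.
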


A stronger results for the case of a finite extension $K/k$ was obtained in \cite{Pawel}. In this case, there is actually an equivalence $D(\mathcal{A}_{K}) \to D(\mathcal{A})_{K}$.

The proof of this theorem will be carried out in several steps. First we will notice that, in the purely transcendental case $K=k(T,T')$, this comes down to lifting the actions of the two variables $T$ and $T'$ on a complex $C^{\bullet}\in D(\mathcal{A})_{K}$, given by $\rho_{C}(T)$ and $\rho_{C}(T')$,  to actions coming from morphisms in $\mathcal{A}$ that commute with each other. 

Then in Lemma \ref{simpler} we will tackle the case of one variable and obtain a complex $M^{\bullet}\in D(\mathcal{A}_{k[T]})$ with a quasi-isomorphism to $C^{\bullet}$ as objects of $D(\mathcal{A})$, and such that the $T$-actions on $M^{\bullet}$ and $C^{\bullet}$ cohincide. At this point, since $\rho_{C}(T)$ is an automorphism of $C^{\bullet}$, tensoring with $k(T)$ will give us a complex in $D(\mathcal{A}_{k(T)})$ which is still quasi-isomorphic to $C^{\bullet}$.

A similar process can be repeated twice, as we will show in Lemma \ref{simpler2}.

%%%%%%%%%%%%%%%%%%%%%

\begin{lemma}\label{equivalence}
Let $e^{n}D(\mathcal{A})$ be the category whose
\begin{enumerate}
\item Objects are pairs $(C, \varphi_{1},\dots,\varphi_{n})$ where $E \in \mathrm{Ob}(D(\mathcal{A}))$, $\varphi_{i} \in \mathrm{End}_{D(\mathcal{A})}(C)$ for all $i$, and $\varphi_{i}$ commutes with $\varphi_{j}$ for all $i,j$;
\item Morphisms $a : (C, \varphi_{1},\dots,\varphi_{n}) \to (C', \varphi'_{1},\dots,\varphi'_{n})$ are elements
$a \in \mathrm{Hom}_{D(\mathcal{A})}(C, C')$ such that $a \circ \varphi_{i} = \varphi'_{i} \circ a$.
\end{enumerate}
Consider the full subcategory $e^{n}D'(\mathcal{A}) \subset e^{n}D(\mathcal{A})$
whose objects consist
of those pairs $(C, \varphi_{1},\dots,\varphi_{n})$ such that for every nonzero $f \in k[T_{1},\ldots,T_{n}]$
the map $f(\varphi_{1},\dots,\varphi_{n}) : C \to C$ is an isomorphism in $D(\mathcal{A})$. 

The category $D(\mathcal{A})_{k(T_{1},\ldots,T_{n})}$ is equivalent to the category $e^{n}D'(\mathcal{A})$. The equivalence is given by the functor
$$
D(\mathcal{A})_{k(T_{1},\ldots,T_{n})} \longrightarrow e^{n}D'(\mathcal{A}),
\quad
(C, \rho_C) \longmapsto (C, \rho_C(T_{1}),\ldots, \rho_C(T_{n})).
$$
\end{lemma}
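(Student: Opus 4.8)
The plan is to recognize the functor as restriction of the $K$-action along the inclusion $k[T_1,\ldots,T_n]\hookrightarrow k(T_1,\ldots,T_n)=\mathrm{Frac}(k[T_1,\ldots,T_n])$, and to show it is fully faithful and essentially surjective; in fact it will be a bijection on objects. The whole argument rests on the universal property of the fraction field, applied with the (a priori noncommutative) target ring $\mathrm{End}_{D(\mathcal{A})}(C)$, together with the elementary observation that compatibility with the full $K$-action is the same as compatibility with the actions of the generators $T_i$.

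First I would check that the functor is well defined, i.e. that $(C,\rho_C(T_1),\ldots,\rho_C(T_n))$ really lies in $e^{n}D'(\mathcal{A})$. The endomorphisms $\rho_C(T_i)$ commute because $\rho_C$ is a $k$-algebra map out of the commutative ring $K$, and for every nonzero $f\in k[T_1,\ldots,T_n]$ the endomorphism $f(\rho_C(T_1),\ldots,\rho_C(T_n))=\rho_C(f)$ is invertible, since $f$ is a unit of the field $K$ and $\rho_C$ is a ring homomorphism.

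For essential surjectivity (indeed surjectivity on objects) I would start from an object $(C,\varphi_1,\ldots,\varphi_n)\in e^{n}D'(\mathcal{A})$. The commuting $\varphi_i$ assemble into a $k$-algebra map $\phi:k[T_1,\ldots,T_n]\to \mathrm{End}_{D(\mathcal{A})}(C)$ whose image is a commutative subring, and by hypothesis $\phi(f)$ is a unit for every nonzero $f$. Since $\phi(f)$ commutes with the commutative ring $\phi(k[T_1,\ldots,T_n])$, so does $\phi(f)^{-1}$; hence the subring generated by the image of $\phi$ together with these inverses is again commutative, and the universal property of the localization produces a unique $k$-algebra map $\rho_C:K\to \mathrm{End}_{D(\mathcal{A})}(C)$ with $\rho_C(T_i)=\varphi_i$. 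Then $(C,\rho_C)$ is an object of $D(\mathcal{A})_K$ mapping to the given object.

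Finally, for full faithfulness I would compare the two notions of morphism. A morphism in $D(\mathcal{A})_K$ is an $a\in\mathrm{Hom}_{D(\mathcal{A})}(C,C')$ commuting with the entire $K$-action, and taking $\lambda=T_i$ shows it is also a morphism in $e^{n}D'(\mathcal{A})$. Conversely, given $a$ with $a\circ\varphi_i=\varphi'_i\circ a$ for all $i$, the set $S=\{\lambda\in K : a\circ\rho_C(\lambda)=\rho_{C'}(\lambda)\circ a\}$ is a $k$-subalgebra of $K$; it contains every $T_i$, hence all of $k[T_1,\ldots,T_n]$, and it is closed under passing to inverses because $\rho_C(\lambda)$ and $\rho_{C'}(\lambda)$ are invertible for $\lambda\neq 0$. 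Therefore $S=K$, so the two Hom-sets coincide and the functor is fully faithful. The only point requiring care is the noncommutative flavour of the universal property in the essential surjectivity step, and this is precisely resolved by the commutativity of the $\varphi_i$ built into the definition of $e^{n}D'(\mathcal{A})$; everything else is formal.
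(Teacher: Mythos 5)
Your proof is correct and follows essentially the same route as the paper: the paper simply exhibits the inverse functor $(C,\varphi_1,\ldots,\varphi_n)\mapsto (C,\rho_C)$ with $\rho_C(T_i)=\varphi_i$, and your argument is exactly the verification that this inverse is well defined (via the universal property of the fraction field applied to the possibly noncommutative target $\mathrm{End}_{D(\mathcal{A})}(C)$, resolved by the commutativity of the $\varphi_i$) together with the fullness check that the paper leaves implicit.
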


\begin{proof}
The equivalence is given by the inverse functor
\begin{align*}
e^{n}D'(\mathcal{A}) &\longrightarrow D(\mathcal{A})_{k(T_{1},\ldots,T_{n})} \\
(C,\varphi_{1},\ldots,\varphi_{n}) & \mapsto 
\left(C, \begin{array}{rcl} \rho_{C}:k(T_{1},\ldots,T_{n}) & \ra & \mathrm{Aut}©\\ T_{i}& \mapsto & \varphi_{i} \end{array}\right)
\end{align*}

\end{proof}

%%%%%%%%%%%%%%%%%%%%%%%%%%%%%%%%%%%%

\begin{lemma}\label{simpler}
Let $\mathcal{A}$ be a $k$-linear abelian category satisfying AB5, where $k$ is a field. Let $C^\bullet$ be a complex in $D(\mathcal{A})$. Let $\varphi \in \mathrm{Hom}_{D(\mathcal{A})}(C^\bullet, C^\bullet)$. Then there exists a complex
$M^{\bullet}\in D(\mathcal{A}_{k[T]})$ and a quasi-isomorphism $C^\bullet \to M^\bullet$ as objects of $D(\mathcal{A})$ such that the action of multiplication by $T$ on $M^\bullet$ corresponds to the action of multiplication by $\varphi$
on $C^\bullet$.
\end{lemma}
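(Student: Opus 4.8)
The plan is to rigidify the derived endomorphism $\varphi$ into an honest, $k[T]$-linear chain map and then take a mapping cone. The essential difficulty is precisely that $\varphi$ is merely a morphism in $D(\mathcal{A})$, not a map of complexes, so the naive candidate ``multiplication by $T$ minus $\varphi$'' on $C^\bullet \otimes_{k} k[T]$ is not a genuine morphism of complexes of $k[T]$-modules, and its cone need not come from $D(\mathcal{A}_{k[T]})$ at all. Everything hinges on replacing this derived datum by chain-level data.

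First I would represent $\varphi$ by a roof $C^\bullet \xleftarrow{s} E^\bullet \xrightarrow{f} C^\bullet$, where $s$ is a quasi-isomorphism and $s,f$ are honest chain maps; this is possible by the construction of $D(\mathcal{A})$ as a localization. Using Definition \ref{more tensoring} (whose formation of the tensor with the infinite-dimensional $k$-algebra $k[T]$ relies on AB5 via Definition \ref{abelian tensoring}), I form the complexes of free $k[T]$-module objects $E^\bullet \otimes_{k} k[T]$ and $C^\bullet \otimes_{k} k[T]$, on each of which multiplication by $T$ is the honest shift of coefficients. The key observation is that, since $s$ and $f$ are genuine chain maps compatible with this shift, the assignment sending the $T^n$-coefficient $\eta \in E^\bullet$ to $s(\eta)T^{n+1} - f(\eta)T^n$ defines a genuine $k[T]$-linear map of complexes $\Phi : E^\bullet \otimes_{k} k[T] \to C^\bullet \otimes_{k} k[T]$ in $\mathcal{A}_{k[T]}$. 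I then set $M^\bullet := \mathrm{Cone}(\Phi)$, computed inside the abelian category $\mathcal{A}_{k[T]}$; by construction it is an object of $D(\mathcal{A}_{k[T]})$ carrying an honest multiplication-by-$T$ action.

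It then remains to verify that, as an object of $D(\mathcal{A})$, $M^\bullet$ is quasi-isomorphic to $C^\bullet$ with $T$ acting as $\varphi$. Here I would invoke AB5 to commute cohomology past the countable direct sums, exactly as in the proof of Lemma \ref{tensor}, obtaining $H^i(E^\bullet \otimes_{k} k[T]) = H^i(E^\bullet)\otimes_k k[T]$ and similarly for $C^\bullet$. Because $s$ is a quasi-isomorphism, $H^i(s)$ is invertible, and a direct computation shows $H^i(\Phi)$ is injective with cokernel isomorphic to $H^i(C^\bullet)$, on which $T$ acts as $H^i(f)\circ H^i(s)^{-1} = H^i(\varphi)$. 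The long exact sequence of the cone then collapses to give $H^i(M^\bullet) \cong H^i(C^\bullet)$ with $T$ acting as $H^i(\varphi)$. Finally, the honest chain map $C^\bullet \to M^\bullet$ given by including $C^\bullet$ as the $T^0$-coefficients of $C^\bullet \otimes_{k} k[T]$ followed by the structural map $C^\bullet \otimes_{k} k[T] \to \mathrm{Cone}(\Phi)$ is a quasi-isomorphism, and tracking the identification $v\,T^n \mapsto \varphi^n(v)$ on cohomology confirms that multiplication by $T$ on $M^\bullet$ corresponds to $\varphi$ on $C^\bullet$.

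The main obstacle is the conceptual first step: passing from the derived endomorphism $\varphi$ to honest chain-level data. The device of spreading $\varphi$ across the two legs $s,f$ of a roof and forming the $k[T]$-linear chain map $\Phi$ is what guarantees that the cone is genuinely a complex of $k[T]$-modules; the subsequent cohomology bookkeeping is routine given AB5, but one must take care that the cokernel computation yields the $\varphi$-action and not its inverse — the latter being what the naive mapping telescope of $\varphi$ would instead produce.
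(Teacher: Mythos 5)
Your construction is essentially identical to the paper's: you represent $\varphi$ by a roof $C^\bullet \xleftarrow{s} E^\bullet \xrightarrow{f} C^\bullet$, form the honest $k[T]$-linear chain map $s\otimes T - f\otimes 1$ (the paper's $\varphi'\otimes 1 - u\otimes T$ up to sign) between the tensored complexes, take its cone $M^\bullet$, and include $C^\bullet$ as the $T^{0}$-coefficients, with the same injectivity-on-cohomology and cokernel computation. The one point where the paper is more careful is the final compatibility check: agreement of the $T$- and $\varphi$-actions on cohomology objects alone does not by itself give commutativity of the relevant square in $D(\mathcal{A})$, but in your setup the stronger statement follows at once, exactly as in the paper, because the cone map $c$ is $k[T]$-linear and kills $s\otimes T - f\otimes 1$ (two consecutive maps in a triangle compose to zero), whence $T\circ c = c\circ(\varphi\otimes 1)$ in $D(\mathcal{A}_{k[T]})$.
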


Note that when $\mathcal{A}$ is a Grothendieck category, the same result can be achieved by considering the morphism corresponding to $\varphi$ on a $K$-injective replacement of $C^{\bullet}$ (which exists by \cite[Prop. 3.2]{brown}) and defining the action of $T$ accordingly. However, in what follows we will need this specific form for the complex $M^{\bullet}$.

\begin{proof}
The map $\varphi: C^{\bullet}\ra C^{\bullet}$ in $D(\mathcal{A})$ corresponds to a diagram of complexes in $D(\mathcal{A})$
$$\xymatrix{
&Q^{\bullet} \ar[dl]_{u} \ar[dr]^{\varphi'} \\
C^{\bullet} \ar@{-->}[rr]^-{\varphi} & &C^{\bullet} 
}$$
where $u$ is a quasi-isomorphism.

Let $C^{\bullet}[T]=C^{\bullet}\otimes_{k}k[T]$ as a complex in $D(\mathcal{A}_{k[T]})$. Consider the morphism $\varphi \otimes 1 - 1 \otimes T: C^{\bullet}[T] \ra C^{\bullet}[T]$ in $D(\mathcal{A}_{k[T]})$. This can be represented by actual maps of complexes
$$\xymatrix{
&Q^{\bullet}[T] \ar[dl]_{u\otimes 1} \ar[dr]^{\varphi' \otimes 1 - u \otimes T} \\
C^{\bullet}[T] \ar@{-->}[rr]^{\varphi \otimes 1 - 1 \otimes T} & &C^{\bullet} [T]
}$$

The map $\varphi' \otimes 1 - u \otimes T$ is injective on all cohomology objects: to prove this we need to show that $\varphi' \otimes 1 - u \otimes T:H^{r}(Q^{\bullet}[T])\ra H^{r}(C^{\bullet}[T])$ is injective for every $r$.

Let $\alpha \in H^{r}(Q^{\bullet}[T])$, $\alpha\neq 0$, then
$$\alpha=\sum_{i=0}^{n}\alpha_{i} T^{i}$$
where all of the $\alpha_{i}$ are different from zero in $H^{r}(Q^{\bullet})$. If 
$$0=(\varphi' \otimes 1 - u \otimes T)\alpha= \sum_{i=0}^{n}\varphi'(\alpha_{i}) T^{i}-\sum_{i=0}^{n}u(\alpha_{i})T^{i+1}$$
then the only term of degree $n+1$ in $T$, $u(\alpha_{n})T^{n+1}$, must be zero in $H^{r}(C^{\bullet})$, hence $u(\alpha_{n})=0$, hence $\alpha_{n}=0$ since $u$ is a quasi-isomorphism. This contradicts our assumption that $\alpha_{i}\neq 0\;\forall i$, and so this proves injectivity.

Now set
$$
M^\bullet
=
\text{Cone}
(Q^\bullet[T] \xrightarrow{\varphi' \otimes 1 - u \otimes T} C^\bullet[T])
$$

Then we have a distinguished triangle
\begin{equation}\label{triangle1}
Q^\bullet[T]
\xrightarrow{\varphi' \otimes 1 - u \otimes T}
C^\bullet[T]
\longrightarrow M^\bullet
\longrightarrow (Q^\bullet[T])[1]
\end{equation}
and by injectivity of the map $\varphi' \otimes 1 - u \otimes T$ on the cohomology objects we get a short exact sequence in cohomology
$$0\ra H^{r}(Q^{\bullet}[T])\xrightarrow{\varphi' \otimes 1 - u \otimes T}H^{r}(C^{\bullet}[T])\longrightarrow H^{r}(M^{\bullet})\ra 0$$
hence we get 
$$H^{r}(M^{\bullet})=\text{Coker}( H^{r}(Q^{\bullet}[T])\xrightarrow{\varphi' \otimes 1 - u \otimes T}H^{r}(C^{\bullet}[T]))$$
for any $r$.

Now consider the composition 
$$
C^\bullet \longrightarrow C^\bullet[T] \xrightarrow{c} M^\bullet
$$

This map is a quasi-isomorphism; to prove this we just need to show that under the map above, $H^{r}(C^{\bullet}) \cong \text{Coker}( H^{r}(Q^{\bullet}[T])\xrightarrow{\varphi' \otimes 1 - u \otimes T}H^{r}(C^{\bullet}[T])) $ for every $r$.
 
Proceed as follows: first of all, considered as a sub-object of $H^{r}(C^{\bullet}[T])$ via the obvious map $C^{\bullet}\to C^{\bullet}[T]$, $H^{r}(C^{\bullet})$ is not in the image of $\varphi' \otimes 1 - u \otimes T$, since, for any element $\alpha=\sum_{i=1}^{n}\alpha_{i} T^{i}$ of $H^{r}(Q^{\bullet}[T])$, its image $\sum_{i=1}^{n}\varphi(\alpha_{i}) T^{i}-\sum_{i=0}^{n}u(\alpha_{i})T^{i+1}$ is either zero or has a nonzero term of positive degree. To prove that any term of positive degree $\beta=\sum_{i=1}^{n}\beta_{i} T^{i}$ is in the image up to an element of degree zero, notice that it can be written as an element of lower degree plus an element of the image as follows:
\begin{align*}
\sum_{i=0}^{n}\beta_{i} T^{i} &=\sum_{i=0}^{n}\beta_{i} T^{i}-(\varphi' \otimes 1 - u \otimes T)(u^{-1}(\beta_{n})T^{n-1})+(\varphi' \otimes 1 - u \otimes T)(u^{-1}(\beta_{n})T^{n-1})= \\
&= \sum_{i=0}^{n}\beta_{i} T^{i}-\varphi'(u^{-1}(\beta_{n}))T^{n-1}+\beta_{n}T^{n} +(\varphi' \otimes 1 - u \otimes T)(u^{-1}(\beta_{n})T^{n-1})\\
&=\sum_{i=0}^{n-1}\beta_{i} T^{i}-\varphi'(u^{-1}(\beta_{n}))T^{n-1}+(\varphi' \otimes 1 - u \otimes T)(u^{-1}(\beta_{n})T^{n-1})
\end{align*}

Hence we found a complex $M^{\bullet}\in D(\mathcal{A}_{k[T]})$ which is quasi-isomorphic to $C^{\bullet}$ as an object of $D(\mathcal{A})$; moreover the action of multiplication by $\varphi$
on $C^\bullet$ corresponds to the action by multiplication by $T$ on $M^\bullet$, because the following diagram is commutative in $D(\mathcal{A})$:
$$
\xymatrix{
C^\bullet \ar[r] \ar[d]^{\varphi} & C^\bullet[T] \ar[r]^{c}\ar[d]^{\varphi\otimes 1} & M^\bullet \ar[d]^{T}\\
C^\bullet \ar[r] & C^\bullet[T] \ar[r]^{c} & M^\bullet
}
$$
this follows from the fact that $c\circ (1\otimes T) - (\varphi \otimes 1)\circ c = (1\otimes T)\circ c - (\varphi \otimes 1)\circ c = (1\otimes T - \varphi \otimes 1)\circ c =0$ since those are two consecutive maps in a triangle. 
\end{proof}

%%%%%%%%%%%%%%%%%%%%%%%

\begin{lemma}\label{simpler2}
Let $\mathcal{A}$ be a $k$-linear abelian category satisfying AB5, where $k$ is a field. Let $C^\bullet$ be a complex in $D(\mathcal{A})$. 

Let $\varphi \in \mathrm{Hom}_{D(\mathcal{A})}(C^\bullet, C^\bullet)$ such that $f(\varphi)$ is an isomorphism for all $f\in k[T]$ monic. Then there exists a complex
$N^{\bullet}\in D(\mathcal{A}_{k(T)})$ and a quasi-isomorphism $C^\bullet \to N^\bullet$ as objects of $D(\mathcal{A})$ such that the action of multiplication by $T$ on $N^\bullet$ corresponds to the action by multiplication by $\varphi$ on $C^\bullet$.

Likewise, let $\varphi, \psi \in \mathrm{Hom}_{D(\mathcal{A})}(C^\bullet, C^\bullet)$ such that $\varphi$ and $\psi$ commute with each other and such that $f(\varphi,\psi)$ is a quasi-isomorphisms for all $f\in k[T,T']$ nonzero. Then there exists a complex
$N^{\bullet}\in D(\mathcal{A}_{k(T,T')})$ and a quasi-isomorphism
$j : C^\bullet \to N^\bullet$ as objects of $D(\mathcal{A})$ such that the action of multiplication by $T$ (resp. $T'$) on $N^\bullet$ corresponds to the action by multiplication by $\varphi$ (resp. $\psi$) on $C^\bullet$.\end{lemma}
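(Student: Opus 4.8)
The one-variable statement is just the composition of the two tools already available. The plan is to apply Lemma \ref{simpler} to $\varphi$, producing a complex $M^{\bullet}\in D(\mathcal{A}_{k[T]})$ together with a quasi-isomorphism $C^{\bullet}\to M^{\bullet}$ in $D(\mathcal{A})$ under which multiplication by $T$ corresponds to $\varphi$. Through this quasi-isomorphism, multiplication by a nonzero $f\in k[T]$ on $M^{\bullet}$ corresponds to $f(\varphi)$ on $C^{\bullet}$, which is an isomorphism by hypothesis (a nonzero $f$ differs from a monic polynomial by a unit of $k$); hence multiplication by every element of $S=k[T]\setminus\{0\}$ is a quasi-isomorphism of $M^{\bullet}$. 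Lemma \ref{tensor} then gives a quasi-isomorphism $M^{\bullet}\to M^{\bullet}\otimes_{k[T]}k(T)=:N^{\bullet}$, with $N^{\bullet}\in D(\mathcal{A}_{k(T)})$ still carrying the $T$-action corresponding to $\varphi$, and composing yields the desired map.

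For two variables the strategy is to iterate the one-variable case, peeling off $\varphi$ first and then $\psi$ over the enlarged base field. I would first run the construction above on $\varphi$ to obtain $N_{1}^{\bullet}\in D(\mathcal{A}_{k(T)})$ with a quasi-isomorphism $q_{1}:C^{\bullet}\to N_{1}^{\bullet}$ and $T\leftrightarrow\varphi$. The category $\mathcal{A}_{k(T)}$ is again a $k(T)$-linear abelian category satisfying AB5 (the forgetful functor to $\mathcal{A}$ creates filtered colimits, so their exactness descends), so the one-variable result applies verbatim over the base $k(T)$; moreover one has the iterated base-change identity $(\mathcal{A}_{k(T)})_{k(T)(T')}\cong\mathcal{A}_{k(T,T')}$, since an object of the left-hand side is a $C\in\mathcal{A}$ equipped with commuting actions of $k(T)$ and of $T'$, i.e. exactly a $k(T,T')$-action. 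Granting that $\psi$ lifts to an endomorphism $\bar\psi$ of $N_{1}^{\bullet}$ in $D(\mathcal{A}_{k(T)})$, the hypothesis transfers correctly: for monic $g\in k(T)[T']$ clearing denominators gives $q(T)\,g(T')=h(T,T')$ with $q\in k[T]$ and $h\in k[T,T']$ both nonzero, so that $g(\bar\psi)=q(\varphi)^{-1}h(\varphi,\psi)$ is a quasi-isomorphism by the standing hypothesis. A second application of the one-variable result to $(\mathcal{A}_{k(T)},N_{1}^{\bullet},\bar\psi)$ then produces $N^{\bullet}\in D(\mathcal{A}_{k(T,T')})$ with $T'\leftrightarrow\bar\psi$, and composing quasi-isomorphisms finishes the argument.

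The main obstacle is exactly the step taken for granted above. The transported endomorphism $q_{1}\psi q_{1}^{-1}$ is a priori only a morphism of $N_{1}^{\bullet}$ in $D(\mathcal{A})$ commuting with the $k(T)$-action, whereas to reapply Lemma \ref{simpler} I need an honest morphism in $D(\mathcal{A}_{k(T)})$; equivalently I must lift $\psi$ already at the level of $M^{\bullet}\in D(\mathcal{A}_{k[T]})$. Here the plan is to use the explicit cone presentation $M^{\bullet}=\mathrm{Cone}(Q^{\bullet}[T]\xrightarrow{\varphi'\otimes 1-u\otimes T}C^{\bullet}[T])$ from Lemma \ref{simpler} together with the commutativity $\varphi\psi=\psi\varphi$. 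Writing $\psi$ as a roof and transporting it along $u$ to an endomorphism $\psi_{Q}=u^{-1}\psi u$ of $Q^{\bullet}$, the commutativity relation gives $u\psi_{Q}=\psi u$ and $\varphi'\psi_{Q}=\psi\varphi'$ in $D(\mathcal{A})$; applying $-\otimes k[T]$ and combining these two identities shows that the square with horizontal map $\varphi'\otimes 1-u\otimes T$ and vertical maps $\psi_{Q}\otimes 1$, $\psi\otimes 1$ commutes in $D(\mathcal{A}_{k[T]})$. The triangulated axiom TR3, completing a commutative square to a morphism of distinguished triangles, then produces an endomorphism $\tilde\psi$ of $M^{\bullet}$ in $D(\mathcal{A}_{k[T]})$; being $k[T]$-linear it commutes with the $T$-action automatically, and chasing the triangle shows $\tilde\psi\,q=q\,\psi$, so it genuinely lifts $\psi$. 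Tensoring with $k(T)$ yields the required $\bar\psi$.

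I would close by observing that the two actions on $N^{\bullet}$ commute strictly and at the level of the abelian category, which is automatic in this iterated formulation: $N^{\bullet}$ lives in $D(\mathcal{A}_{k(T,T')})$, so the $k(T,T')$-module structure is part of the data, and the composite quasi-isomorphism $j:C^{\bullet}\to N^{\bullet}$ carries $\varphi$ to $T$ and $\psi$ to $T'$ by construction. It is precisely this passage through honest base-change categories, rather than any attempt to rectify two merely homotopy-commuting chain endomorphisms at once, that makes the commutation of the two actions free of charge.
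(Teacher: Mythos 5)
Your proof is correct and follows essentially the same route as the paper's: Lemma \ref{simpler} followed by Lemma \ref{tensor} for the one-variable case, and, for two variables, lifting $\psi$ to an endomorphism $\tilde\psi$ of the cone $M^\bullet$ in $D(\mathcal{A}_{k[T]})$ by completing the commutative square (which commutes because $\varphi\psi=\psi\varphi$) to a morphism of distinguished triangles, then rerunning the one-variable construction over the base field $k(T)$. Your explicit clearing-of-denominators check that $g(\bar\psi)$ is a quasi-isomorphism for every monic $g\in k(T)[T']$, and the observation that $\mathcal{A}_{k(T)}$ is again AB5, spell out steps the paper's proof leaves implicit.
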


\begin{proof}
By Lemma \ref{simpler} we can find a complex $M^{\bullet}\in \mathcal{A}_{k[T]}$ and a quasi-isomorphism $j : C^\bullet \to M^\bullet$ as objects of $D(\mathcal{A})$ such that the action of multiplication by $T$ on $M^\bullet$ corresponds to the action by multiplication by $\varphi$ on $C^\bullet$. This implies that multiplication by $f(T)$ gives a quasi-isomorphism of $M^{\bullet}$ for all $f$ monic.

Now let $N^{\bullet}:=M^\bullet \otimes_{k[T]} k(T)$ as in Definition \ref{more tensoring} above. This is a complex in $D(\mathcal{A}_{k(T)})$ and it is quasi-isomorphic to $C^{\bullet}$ as objects of $D(\mathcal{A})$, by Lemma \ref{tensor}. The action of $\varphi$ on $C^{\bullet}$ corresponds to the action of $T$ on $N^{\bullet}$.

For the second case, again by Lemma \ref{simpler} we can find a complex $M^{\bullet}\in \mathcal{A}_{k[T]}$ and a quasi-isomorphism
$j : C^\bullet \to M^\bullet$ as objects of $D(\mathcal{A})$ such that the action of multiplication by $T$ on $M^\bullet$ corresponds to the action by multiplication by $\varphi$ on $C^\bullet$.

Moreover, we have an exact triangle
$$C^{\bullet}[T] \xrightarrow{\varphi\otimes 1-1\otimes T}C^{\bullet}[T]\longrightarrow M^{\bullet}$$
in $D(\mathcal{A}_{k[T]})$, see (\ref{triangle1}).

Then, since $\varphi$ and $\psi$ commute with each other, we get a diagram in $D(\mathcal{A}_{k[T]})$:
$$
\xymatrix{
C^\bullet[T] \ar[rr]^{\varphi \otimes 1 - 1 \otimes T}
\ar[d]_{\psi \otimes 1} & &
C^\bullet[T] \ar[d]^{\psi \otimes 1} \\
C^\bullet[T] \ar[rr]^{\varphi \otimes 1 - 1 \otimes T} & &
C^\bullet[T]
}
$$
This diagram is commutative: this follows from the fact that $\varphi\circ \psi=\psi \circ \varphi$ in $D(\mathcal{A})$, hence $\varphi \psi\otimes 1=\psi \varphi\otimes 1$ in  $D(\mathcal{A}_{k[T]})$. Therefore we can find a map $\tilde{\psi}$ on $M^{\bullet}$ so that the following diagram commutes:
$$
\xymatrix{
C^\bullet[T] \ar[rr]^{\varphi \otimes 1 - 1 \otimes T}
\ar[d]_{\psi\otimes 1} & & C^\bullet[T] \ar[d]^{\psi\otimes 1}  \ar[r] &
M^\bullet \ar[d]^{\tilde{\psi}} \ar[r] & (C^\bullet[T])[1] \ar[d]^{\psi\otimes 1} \\
C^\bullet[T] \ar[rr]^{\varphi \otimes 1 - 1 \otimes T} & &
C^\bullet[T] \ar[r] &
M^\bullet \ar[r] & (C^\bullet[T])[1]
}
$$
it follows that the action of $\tilde{\psi}$ on $M^{\bullet}$ is the same as the action of $\psi$ on $C^{\bullet}$, thanks to the commutativity of 
$$
\xymatrix{
C^\bullet \ar[r] \ar[d]^{\psi} & C^\bullet[T] \ar[r]\ar[d]^{\psi\otimes 1} & M^\bullet \ar[d]^{\tilde{\psi}}\\
C^\bullet \ar[r] & C^\bullet[T] \ar[r] & M^\bullet
}
$$
taking into account the fact that, as we mentioned already, the composition of the two horizontal maps is a quasi-isomorphism. As before we can then construct $P^{\bullet}=M^\bullet \otimes_{k[T]} k(T)$, which is quasi-isomorphic to $M^{\bullet}$ and hence we get a corresponding map $\tilde{\psi}: P^{\bullet}\to P^{\bullet}$.

So we are in the following situation: we have a complex $P^{\bullet}\in D(\mathcal{A}_{k(T)})$ and a map $\tilde{\psi}:P^{\bullet}\ra P^{\bullet}$ so that $f(\tilde{\psi})$ is a quasi-isomorphism for all $f\in k(T)[T']$ monic. By Lemma \ref{simpler} again, we get a complex $Q^{\bullet}\in D((\mathcal{A}_{k(T)})_{k(T)[T']})=D(\mathcal{A}_{k(T)[T']})$ which is quasi-isomorphic to $P^{\bullet}$.

Then define
$$N^{\bullet}:=Q^{\bullet}\otimes_{k(T)[T']}k(T,T')$$

By Lemma \ref{tensor}, since $f(T,\psi)$ is a quasi-isomorphisms for all nonzero $f\in k(T)[T']$, the complex $N^{\bullet}\in D(\mathcal{A}_{k(T,T')})$ is quasi isomorphic to $Q^{\bullet}$ as objects of $D(\mathcal{A}_{k(T)[T']})$ hence it is quasi-isomorphic to $C^{\bullet}$ as objects of $D(\mathcal{A})$. The action of $\varphi$ and $\psi$ correspond to the action of $T$ and $T'$ respectively.
\end{proof}

The last thing we need to do is to tackle the case of a general separable field extension of transcendence degree one, corresponding to the last statement of Theorem \ref{ess-sur}:

%%%%%%%%%%%%%%%%%%%%%%%%%%%

\begin{lemma}\label{simpler3}
Let $\mathcal{A}$ be a $k$-linear abelian category satisfying AB5, where $k$ is a field. Let $C^\bullet$ be a complex in $D(\mathcal{A})$. Let $\varphi, \psi \in \mathrm{Hom}_{D(\mathcal{A})}(C^\bullet, C^\bullet)$ such that $\varphi$ and $\psi$ commute with each other, and such that $f(\varphi)$ is a quasi-isomorphisms for all $f\in k[T]$ monic and there exists an irreducible $P\in k[T,T']$ with $P(\varphi, \psi)=0$. 

Then there exists a complex $N^{\bullet}\in D(\mathcal{A}_{k(T)})$ and a quasi-isomorphism $j : C^\bullet \to N^\bullet$ as objects of $D(\mathcal{A})$ such that the action of multiplication by $T$ on $N^\bullet$ corresponds to the action by multiplication by $\varphi$ on $C^\bullet$. Moreover there is a morphism $\tilde{\psi}\in \mathrm{End}(N^{\bullet})$ such that the action of $\psi$ on $C^{\bullet}$ corresponds to the action of $\tilde{\psi}$ on $N^{\bullet}$ and $P(T,\tilde{\psi})$ induces the zero map on all cohomology groups of $N^{\bullet}$.
\end{lemma}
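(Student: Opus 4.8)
The plan is to adapt the construction of Lemma \ref{simpler2}, but treating $\varphi$ and $\psi$ asymmetrically: I will use $\varphi$ to pass into the base-change category over $k(T)$ exactly as before, while keeping $\psi$ as a genuine endomorphism instead of promoting it to a second transcendental variable. This asymmetry is forced by the hypotheses, since $P(\varphi,\psi)=0$ means the family $\{f(\varphi,\psi)\}$ certainly does not consist of quasi-isomorphisms, so the two-variable half of Lemma \ref{simpler2} cannot be applied. What we do have is that $f(\varphi)$ is a quasi-isomorphism for every monic $f\in k[T]$, and hence (since nonzero scalars act invertibly) for every nonzero $f\in k[T]$, which is exactly what is needed to invert $\varphi$.

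First I would apply Lemma \ref{simpler} to $\varphi$, obtaining a complex $M^\bullet\in D(\mathcal{A}_{k[T]})$ and a quasi-isomorphism $j_1\colon C^\bullet\to M^\bullet$ under which multiplication by $T$ corresponds to $\varphi$, together with the distinguished triangle $C^\bullet[T]\xrightarrow{\varphi\otimes 1-1\otimes T}C^\bullet[T]\to M^\bullet$ of (\ref{triangle1}). Then, exactly as in Lemma \ref{simpler2}, I would use the commutativity of $\varphi$ and $\psi$ to promote $\psi\otimes 1$ to an endomorphism of this triangle (a morphism of triangles to itself), yielding a fill-in $\tilde\psi\in\mathrm{End}_{D(\mathcal{A}_{k[T]})}(M^\bullet)$ with $\tilde\psi\circ j_1=j_1\circ\psi$ in $D(\mathcal{A})$. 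Since $f(\varphi)$ is a quasi-isomorphism for all nonzero $f$, Lemma \ref{tensor} shows that $N^\bullet:=M^\bullet\otimes_{k[T]}k(T)\in D(\mathcal{A}_{k(T)})$ is quasi-isomorphic to $M^\bullet$, hence to $C^\bullet$, with the $T$-action still corresponding to $\varphi$; applying $-\otimes_{k[T]}k(T)$ to $\tilde\psi$ produces $\tilde\psi\in\mathrm{End}_{D(\mathcal{A}_{k(T)})}(N^\bullet)$ (automatically $k(T)$-linear, so it commutes with the scalar $T$) whose action corresponds to $\psi$ under the composite quasi-isomorphism $j\colon C^\bullet\to N^\bullet$.

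It then remains to verify the relation. Under the forgetful functor $D(\mathcal{A}_{k(T)})\to D(\mathcal{A})$ the scalar $T$ acts by an endomorphism whose image is $j\varphi j^{-1}$, while $\tilde\psi$ has image $j\psi j^{-1}$; these commute, and forgetting is additive and multiplicative, so $P(T,\tilde\psi)$ is sent to $j\,P(\varphi,\psi)\,j^{-1}=0$. In particular $P(T,\tilde\psi)$ induces the zero map on every cohomology object $H^r(N^\bullet)$, as claimed. I expect the only delicate point --- and the reason the statement asserts vanishing only on cohomology rather than in $D(\mathcal{A}_{k(T)})$ --- to be the non-faithfulness of the forgetful functor: the morphism $P(T,\tilde\psi)$ does vanish after forgetting to $D(\mathcal{A})$ (whence on cohomology), but one cannot upgrade this to $P(T,\tilde\psi)=0$ in the base-change category $D(\mathcal{A}_{k(T)})$ itself. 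The main thing to check carefully is that the fill-in $\tilde\psi$ really does intertwine $j$ with $\psi$, so that each power $\tilde\psi^r$ transports $\psi^r$ and the whole polynomial $P$ transports correctly; this is exactly where the commutativity of $\varphi$ and $\psi$ and the morphism-of-triangles argument of Lemma \ref{simpler2} are used.
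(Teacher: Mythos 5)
Your proposal is correct and follows essentially the same route as the paper's own proof: apply Lemma \ref{simpler} to $\varphi$, obtain the fill-in $\tilde{\psi}$ from a morphism of the triangle (\ref{triangle1}) to itself using $\varphi\psi=\psi\varphi$, and localize via Lemma \ref{tensor}. The only cosmetic difference is the final verification: the paper checks that $P(T,\tilde{\psi})$ kills cohomology by composing with the map $c\colon C^{\bullet}[T]\to M^{\bullet}$ (surjective on cohomology), while you conjugate by the quasi-isomorphism $j$ under the forgetful functor to $D(\mathcal{A})$; both arguments rest on exactly the same intertwining relations, and your remark that non-faithfulness of $D(\mathcal{A}_{k(T)})\to D(\mathcal{A})$ is why only cohomological vanishing can be asserted is apt.
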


\begin{proof}
By Lemma \ref{simpler} we can find a complex $M^{\bullet}\in \mathcal{A}_{k[T]}$ and a quasi-isomorphism
$j : C^\bullet \to M^\bullet$ as objects of $D(\mathcal{A})$ such that the action of multiplication by $T$ on $M^\bullet$ corresponds to the action by multiplication by $\varphi$
on $C^\bullet$.

Moreover, we have an exact triangle
$$C^{\bullet}[T] \xrightarrow{\varphi\otimes 1-1\otimes T}C^{\bullet}[T]\longrightarrow M^{\bullet}$$
in $D(\mathcal{A}_{k[T]})$, see (\ref{triangle1}).

Then, since $\varphi$ and $\psi$ commute with each other, we get a commutative diagram in $D(\mathcal{A}_{k[T]})$
$$
\xymatrix{
C^\bullet[T] \ar[rr]^{\varphi \otimes 1 - 1 \otimes T}
\ar[d]_{\psi \otimes 1} & &
C^\bullet[T] \ar[d]^{\psi \otimes 1} \\
C^\bullet[T] \ar[rr]^{\varphi \otimes 1 - 1 \otimes T} & &
C^\bullet[T]
}
$$

Therefore we can find a map $\tilde{\psi}$ on $M^{\bullet}$ so that the following diagram commutes:
$$
\xymatrix{
C^\bullet[T] \ar[rr]^{\varphi \otimes 1 - 1 \otimes T}
\ar[d]_{\psi\otimes 1} & & C^\bullet[T] \ar[d]^{\psi\otimes 1}  \ar[r] &
M^\bullet \ar[d]^{\tilde{\psi}} \ar[r] & (C^\bullet[T])[1] \ar[d]^{\psi\otimes 1} \\
C^\bullet[T] \ar[rr]^{\varphi \otimes 1 - 1 \otimes T} & &
C^\bullet[T] \ar[r] &
M^\bullet \ar[r] & (C^\bullet[T])[1]
}
$$

Since $P(T,\psi)=0$, we obtain that $P(\varphi\otimes 1, \psi\otimes 1)=0$ in $D(\mathscr{A}_{k[T]})$, hence $P(T,\psi\otimes 1)$ is zero on $C[T]$. 

As before we can construct $N^{\bullet}=M^\bullet \otimes_{k[T]} k(T)$, which is quasi-isomorphic to $M^{\bullet}$ and hence we get a corresponding map $\tilde{\psi}: N^{\bullet}\to N^{\bullet}$ and the action of $\psi$ on $C^{\bullet}$ corresponds to the action of $\tilde{\psi}$ on $N^{\bullet}$. 

Finally, since $P(T,\psi\otimes 1)$ is zero on $C[T]$, it follows that $P(T,\tilde{\psi})=0$ induces the zero map on all cohomology of $M^{\bullet}$ and hence of $N^{\bullet}$.
%I do not get zero on the level of complexes, because two maps being zero on a triangle do not guarantee that the third map will also be zero
\end{proof}

%%%%%%%%%%%%%%%%%%%
We are now ready to prove Theorem \ref{ess-sur}:

\begin{proof}[Proof of Theorem \ref{ess-sur}]
By Lemma \ref{equivalence}, we just need to show that the functors
\begin{align*}
D(\mathcal{A}_{K}) &\ra e^{1}D'(\mathcal{A})\\
C^{\bullet} &\mapsto (C^{\bullet},\cdot T)
\end{align*}
and
\begin{align*}
D(\mathcal{A}_{K}) &\ra e^{2}D'(\mathcal{A})\\
C^{\bullet} &\mapsto (C^{\bullet},\cdot T,\cdot T')
\end{align*}
are essentially surjective.

Let $(E,\varphi)\in e^{1}D'(\mathcal{A})$. Then by Lemma \ref{simpler2} there exists $N^{\bullet}\in \mathcal{A}_{k(T)}$ such that $N$ is quasi isomorphic to $E$ and the action of $\varphi$ on $E^{\bullet}$ corresponds to the action of $T$ on $N^{\bullet}$. This proves the case $i=1$.

Similarly, let $(E,\varphi,\varphi')\in e^{2}D'(\mathcal{A})$. Then by Lemma \ref{simpler2} there exists $N^{\bullet}\in \mathcal{A}_{k(T,T')}$ such that $N$ is quasi isomorphic to $E$ and the action of $\varphi$ and $\varphi'$ on $E^{\bullet}$ correspond to the action of $T$ and $T'$ respectively on $N^{\bullet}$. This proves the case $i=2$.

The last part follows from Lemma \ref{simpler3} by setting $\psi_{\alpha}:=\tilde{\psi}$.
\end{proof}

%%%%%%%%%%%%%%%%%%%%%%%%%
Let us now apply this theorem to the case $\mathcal{A}=\text{QCoh}(X)$, where $X$ is a quasi-compact, separated scheme over a field $k$. This is possible since $\text{QCoh}(X)$ satisfies AB5. Moreover, note that in this case we have an equivalence $D_{\text{Qcoh}}(X)\cong D(\text{Qcoh}(X))$. As a preliminary step, we will prove the following technical lemma:

\begin{lemma}\label{hartshorne}
Let $k\subset K$ be a field extension, $X$ a quasi-compact and separated scheme. Let $X_{K}\stackrel{j}{\ra}X$ the base change morphism. Then there is an equivalence of categories 
$$D_{\mathrm{QCoh}}(X_{K}) \stackrel{\Psi}{\longrightarrow} D(\mathrm{QCoh}(X)_{K})$$
under this equivalence, the functors 
$$Lj^*,\cdot\otimes K:D_{\mathrm{QCoh}}(X) \ra D(\mathrm{QCoh}(X_{K}))$$
and 
$$Rj_{*},\mathrm{Forget}: D_{\mathrm{QCoh}}(X_{K}) \ra D(\mathrm{QCoh}(X))$$
coincide.

In other words, 
\begin{align*}
Rj_{*} =\mathrm{Forget} \circ \Psi:D(\mathrm{QCoh}(X)_{K}) &\ra D_{\mathrm{QCoh}}(X)\\
\Psi \circ Lj^* =-\otimes K: D_{\mathrm{QCoh}}(X) &\ra (D_{\mathrm{QCoh}}(X))_{K}
\end{align*}

This is summarized in the following diagram:
$$\xymatrix{
D_{\mathrm{QCoh}}(X) \ar@/^/ [d]^{\otimes K} \ar@/^5pc /[dd]^{Lj^*} \\
(D_{\mathrm{QCoh}}(X))_{K} \ar@/^/[u]^{\mathrm{Forget}} \\
D_{\mathrm{QCoh}}(X_{K})=D(\mathrm{QCoh}(X_{K})) \ar[u]_{\Psi} \ar@/^5pc/ [uu]^{j_{*}}
}$$
\end{lemma}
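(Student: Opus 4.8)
The plan is to deduce the entire statement from a single equivalence of \emph{abelian} categories, after which all the functorial identities follow formally.

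First I would establish that $j_*$ gives an exact equivalence of abelian categories $\mathrm{QCoh}(X_K) \xrightarrow{\ \sim\ } \mathrm{QCoh}(X)_K$. The morphism $j\colon X_K \to X$ is the base change of the affine morphism $\mathrm{Spec}\,K \to \mathrm{Spec}\,k$, hence $j$ is itself affine; in particular $j_*$ is exact on quasi-coherent sheaves, and by the standard description of quasi-coherent sheaves under an affine morphism it identifies $\mathrm{QCoh}(X_K)$ with the category of quasi-coherent $j_*\mathcal{O}_{X_K}$-modules on $X$. Computing locally one has $j_*\mathcal{O}_{X_K} = \mathcal{O}_X \otimes_k K$ as a sheaf of $\mathcal{O}_X$-algebras, so such a module is exactly a quasi-coherent $\mathcal{O}_X$-module $C$ equipped with a $k$-algebra homomorphism $K \to \mathrm{End}_{\mathcal{O}_X}(C)$, i.e.\ precisely an object of the base change category $\mathrm{QCoh}(X)_K$. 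This verification is the only real content of the proof.

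Next I would pass to derived categories. Since $X$ is quasi-compact and separated so is $X_K$, whence $D_{\mathrm{QCoh}}(X_K) = D(\mathrm{QCoh}(X_K))$; I define $\Psi$ as the functor induced degreewise by the exact equivalence above, giving an equivalence
$$ \Psi\colon D_{\mathrm{QCoh}}(X_K) = D(\mathrm{QCoh}(X_K)) \xrightarrow{\ \sim\ } D(\mathrm{QCoh}(X)_K). $$
This proves the first assertion. For the identity $Rj_* = \mathrm{Forget}\circ\Psi$, I use that $j$ affine forces $Rj_* = j_*$, and that composing the abelian equivalence $j_*$ with the forgetful functor simply returns $j_*\colon \mathrm{QCoh}(X_K)\to\mathrm{QCoh}(X)$ (which is exact); applying this degreewise gives the claimed equality of derived functors.

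The remaining identity $\Psi\circ Lj^* = -\otimes K$ I would obtain from uniqueness of adjoints rather than by direct computation. Indeed $Lj^*$ is left adjoint to $Rj_*$, while $-\otimes K$ is left adjoint to $\mathrm{Forget}$ by Proposition \ref{tensor adjointness} and the subsequent remark; since $\Psi$ is an equivalence (so $\Psi^{-1}$ is left adjoint to $\Psi$) and $\mathrm{Forget}\circ\Psi = Rj_*$, the left adjoint of $Rj_* = \mathrm{Forget}\circ\Psi$ is $\Psi^{-1}\circ(-\otimes K)$. Comparing with $Lj^*$ and cancelling $\Psi^{-1}$ yields $\Psi\circ Lj^* = -\otimes K$. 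The main obstacle is genuinely the abelian equivalence of the first step; the remaining care is bookkeeping between the derived category $D(\mathrm{QCoh}(X)_K)$ of the abelian base change category and the base change $(D_{\mathrm{QCoh}}(X))_K$ of the derived category appearing in the final diagram, the passage between which is exactly the comparison functor studied in Theorem \ref{ess-sur}.
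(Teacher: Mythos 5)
Your proposal is correct and follows essentially the same route as the paper: the abelian equivalence $\mathrm{QCoh}(X_K)\simeq \mathrm{QCoh}(X)_K$ via $j_*$ and the identification $j_*\mathcal{O}_{X_K}=\mathcal{O}_X\otimes K$, then identifying $Rj_*$ with $\mathrm{Forget}$ and deducing $\Psi\circ Lj^*=-\otimes K$ by uniqueness of left adjoints, finally passing to derived categories using $D_{\mathrm{QCoh}}=D(\mathrm{QCoh})$ for quasi-compact separated schemes. Your explicit remarks that $j$ is affine (so $j_*$ is exact and $Rj_*=j_*$) merely make precise a step the paper leaves implicit, and your closing caveat about the comparison between $D(\mathrm{QCoh}(X)_K)$ and $(D_{\mathrm{QCoh}}(X))_K$ is an accurate reading of the same bookkeeping the paper handles via Theorem \ref{ess-sur}.
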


\begin{proof}
There is an equivalence of categories induced by $j_{*}$ between quasi-coherent $\Ol_{X_{K}}$-modules and quasi-coherent $j_{*} \Ol_{X_{K}}$-modules on X. But $j_{*} \Ol_{X_{K}}=\Ol_{X}\otimes K$ and an $(\Ol_{X}\otimes K)$-module is the same thing as an $\Ol_{X}$-module with a $K$-structure which is compatible with its $k$-structure.

Hence we get an equivalence
\begin{align*}
\Psi: \mathrm{QCoh}(X_{K}) &\to \text{QCoh}(X)_{K} \\
C &\mapsto (j_{*}C,\rho_{C})
\end{align*}
where $\rho_{C}$ is the composition $K\to \Ol_{X}\otimes K\to \mathrm{End}(j_{*}C)$.

Under this equivalence, the two functors $j_{*}$ and ``Forget'' coincide; moreover, always under the same equivalence, both $j^{*}$ and $-\otimes K$ are left adjoint to $j_{*}$, hence they also coincide.

Thus all of this also holds for the corresponding derived categories; hence the statement follows since $D_{\mathrm{QCoh}}(X) =D(\mathrm{QCoh}(X))$ for $X$ quasi compact and separated.
\end{proof}

%%%%%%%%%%%%%%%%%%%%%%%%

\begin{corollary}\label{lift}
Let $X$ be a quasi compact, separated scheme over a field $k$.

Let $K=k(T)$ or $K=k(T,T')$.

The map
\begin{align*}
D_{\mathrm{QCoh}}(X_{K}) & \longrightarrow  (D_{\mathrm{QCoh}}(X))_{K} \\
C^{\bullet} &\mapsto (\mathrm{Forget}(C^{\bullet}),\rho_{C})
\end{align*}
is essentially surjective, where $\rho_{C}$ is the obvious $K$-structure on $C$.

Moreover, if $L$ is a finite separable extension of $K=k(T)$ with $L=K(\alpha)=K[T]/P(T)$ then we can lift an object $(C^{\bullet}, \rho_{C})\in (D_{\mathrm{QCoh}}(X))_{L}$ to an object $N^{\bullet}$ of $D_{\mathrm{QCoh}}(X_{K})$ endowed with a map $\tilde{\psi}\in \mathrm{End}(N^{\bullet})$ such that $P(\tilde{\psi})$ induces the zero map on all cohomology groups of $N^{\bullet}$.
\end{corollary}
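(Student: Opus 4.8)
The plan is to obtain this statement as a formal consequence of Theorem \ref{ess-sur} and Lemma \ref{hartshorne}, applied to the abelian category $\mathcal{A} = \mathrm{QCoh}(X)$. This category satisfies AB5 because $X$ is quasi-compact and separated, and for such $X$ one has $D_{\mathrm{QCoh}}(X) = D(\mathrm{QCoh}(X))$, so all the hypotheses of the two results are met with $K = k(T)$ or $K = k(T,T')$.

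First I would assemble the relevant functors into a single composite. Lemma \ref{hartshorne} provides an equivalence $\Psi : D_{\mathrm{QCoh}}(X_K) \to D(\mathrm{QCoh}(X)_K) = D(\mathcal{A}_K)$, while Theorem \ref{ess-sur} shows that the natural functor $D(\mathcal{A}_K) \to D(\mathcal{A})_K = (D_{\mathrm{QCoh}}(X))_K$ is essentially surjective. The functor in the statement is then the composite $D_{\mathrm{QCoh}}(X_K) \xrightarrow{\Psi} D(\mathcal{A}_K) \to (D_{\mathrm{QCoh}}(X))_K$, which is essentially surjective since $\Psi$ is an equivalence and the second arrow is essentially surjective.

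The one point that genuinely requires checking is that this composite really is the map $C^{\bullet} \mapsto (\mathrm{Forget}(C^{\bullet}), \rho_{C})$ written in the statement. On underlying objects the composite sends $C^{\bullet}$ to $\mathrm{Forget}(\Psi(C^{\bullet})) = Rj_{*}(C^{\bullet})$, using the identity $Rj_{*} = \mathrm{Forget} \circ \Psi$ from Lemma \ref{hartshorne}; thus the underlying object is exactly the image of $C^{\bullet}$ under the forgetful functor $D_{\mathrm{QCoh}}(X_K) \to D_{\mathrm{QCoh}}(X)$, equipped with its induced $K$-structure $\rho_{C}$. This matches the description in the corollary.

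For the ``moreover'' part I would start with $(C^{\bullet}, \rho_{C}) \in (D_{\mathrm{QCoh}}(X))_{L} = D(\mathcal{A})_{L}$ and apply the last statement of Theorem \ref{ess-sur} to lift it to an object $N_{0}^{\bullet} \in D(\mathcal{A}_K)$ together with an endomorphism $\psi_{\alpha}$ for which $P(\psi_{\alpha})$ vanishes on all cohomology groups. I would then transport this data across $\Psi$, setting $N^{\bullet} = \Psi^{-1}(N_{0}^{\bullet})$ and $\tilde{\psi} = \Psi^{-1}(\psi_{\alpha}) \in \mathrm{End}(N^{\bullet})$, which is legitimate since an equivalence induces isomorphisms on endomorphism rings. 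The least formal part of the argument is verifying that the condition ``$P(\tilde{\psi})$ induces the zero map on cohomology'' is preserved under this transport. This holds because $\Psi$ is $t$-exact: at the level of abelian categories it is induced by $j_{*}$, which is exact for the affine morphism $j : X_K \to X$, so $\Psi$ commutes with cohomology objects and the vanishing of $P(\psi_{\alpha})$ on cohomology is equivalent to that of $P(\tilde{\psi})$.
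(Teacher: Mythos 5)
Your proof is correct and takes essentially the same route as the paper: reduce via the equivalence $\Psi$ of Lemma \ref{hartshorne} to the abelian category $\mathcal{A}=\mathrm{QCoh}(X)$, which satisfies AB5 and has $D_{\mathrm{QCoh}}(X)=D(\mathrm{QCoh}(X))$, and then invoke Theorem \ref{ess-sur} for both the essential surjectivity and the ``moreover'' statement. The paper's own proof is just terser, leaving implicit the points you spell out (the identification $Rj_{*}=\mathrm{Forget}\circ\Psi$ of the composite with the stated map, and the transport of the condition ``$P(\tilde{\psi})$ vanishes on cohomology'' across the $t$-exact equivalence $\Psi$).
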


\begin{proof}
By Lemma \ref{hartshorne}, there is an equivalence between $D_{\mathrm{QCoh}}(X_{K})$ and $D(\mathrm{QCoh}(X)_{K})$, hence it is sufficient to show that the map
\begin{align*}
D(\mathrm{QCoh}(X)_{K}) & \ra  (D(\mathrm{QCoh}(X)))_{K} \\
C^{\bullet} &\mapsto (\mathrm{Forget}(C^{\bullet}),\rho_{C})
\end{align*}
is essentially surjective.

Let $\mathcal{A}=\mathrm{QCoh}(X)$. This category satisfies AB5, hence theorem \ref{ess-sur} applies in this case.
\end{proof}

%%%%%%%%%%%%%%%%%%%%%%%%%%%%%%%%
%%%%%%%%%%%%%%%%%%%%%%%%%%%%%%%%
\section{A representability theorem for derived categories}%
%%%%%%%%%%%%%%%%%%%%%%%%%%%%%%%%
%%%%%%%%%%%%%%%%%%%%%%%%%%%%%%%%

The results of the previous section will become handy to study functors from $D^{b}_{\mathrm{Coh}}(X)$, where $X$ is defined over a field $k$, to a vector space over a bigger field in light of the following theorem:

\begin{theorem}\label{representable}
Let $k$ be a field, $\mathcal{A}$ be a $k$-linear abelian category satisfying AB5, and let $k\hookrightarrow K$ an inclusion of fields. Let $D(\mathscr{A})^{c}$ denote the full subcategory of compact objects in $D(\mathscr{A})$.

Given an exact, contravariant functor 
$$F:D(\mathscr{A})^{c}\ra \underline{\mathrm{mod}}_{K}$$
there exists a $T\in D(\mathscr{A})_{K}$ such that 
$$F(C)=\mathrm{Mor}_{D(\mathscr{A})_{K}}(C\otimes K,T)$$
for all $C\in D(\mathscr{A})^{c}$.
\end{theorem}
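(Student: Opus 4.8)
The plan is to produce the representing object by first representing the underlying $k$-linear functor in the ambient category $D(\mathscr{A})$, and then upgrading the representing object to an object of $D(\mathscr{A})_{K}$ by transporting the $K$-action that $F$ carries on its values. First I would compose $F$ with the forgetful functor $u:\underline{\mathrm{mod}}_{K}\ra\underline{\mathrm{Mod}}_{k}$ to obtain a contravariant, cohomological, $k$-linear functor $\bar F=u\circ F:D(\mathscr{A})^{c}\ra\underline{\mathrm{Mod}}_{k}$. The key observation is that the $K$-vector space structure on the values $F(C)$, being natural in $C$, is precisely the same datum as a $k$-algebra homomorphism $\sigma:K\ra\mathrm{End}(\bar F)$ into the ring of natural self-transformations of $\bar F$, where $\sigma(\lambda)$ acts on each $\bar F(C)$ by multiplication by $\lambda$.

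Next I would extend $\bar F$ to a cohomological functor $\hat F:D(\mathscr{A})^{op}\ra\underline{\mathrm{Mod}}_{k}$ on the whole category that sends coproducts to products, and invoke Brown representability for the compactly generated triangulated category $D(\mathscr{A})$ (as holds in the intended application $\mathscr{A}=\mathrm{QCoh}(X)$) to produce an object $\bar T\in D(\mathscr{A})$ together with a natural isomorphism $\hat F\cong\mathrm{Mor}_{D(\mathscr{A})}(-,\bar T)$ restricting to $\bar F$ on $D(\mathscr{A})^{c}$. Carrying the action $\sigma$ through this construction yields an extension $\hat\sigma:K\ra\mathrm{End}(\hat F)$; the point of representing on all of $D(\mathscr{A})$ rather than only on compacts is that here the Yoneda lemma identifies $\mathrm{End}(\hat F)$ with $\mathrm{End}_{D(\mathscr{A})}(\bar T)$ \emph{faithfully}, so that $\hat\sigma$ becomes a genuine $k$-algebra homomorphism $\rho:K\ra\mathrm{End}_{D(\mathscr{A})}(\bar T)$. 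This exhibits $T:=(\bar T,\rho)$ as an object of $D(\mathscr{A})_{K}$.

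Finally I would verify the representing formula using the adjunction between $-\otimes K$ and the forgetful functor $D(\mathscr{A})\ra D(\mathscr{A})_{K}$ recorded in the remark following Definition \ref{more tensoring}. For compact $C$ one has natural isomorphisms of $k$-vector spaces
$$\mathrm{Mor}_{D(\mathscr{A})_{K}}(C\otimes K,T)\cong\mathrm{Mor}_{D(\mathscr{A})}(C,\mathrm{Forget}(T))=\mathrm{Mor}_{D(\mathscr{A})}(C,\bar T)\cong\bar F(C)=F(C).$$
It then remains to check that this identification is $K$-linear: the action of $\lambda\in K$ on the left-hand side is post-composition with $\rho(\lambda)$, which under the adjunction corresponds to post-composition with $\rho(\lambda)$ on $\mathrm{Mor}_{D(\mathscr{A})}(C,\bar T)=\bar F(C)$, and by the construction of $\rho$ this is multiplication by $\lambda$ on $F(C)$. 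Hence $F(C)\cong\mathrm{Mor}_{D(\mathscr{A})_{K}}(C\otimes K,T)$ as $K$-vector spaces, naturally in $C$.

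I expect the representability step together with the transport of the $K$-structure to be the main obstacle. Applying Brown representability requires extending $\bar F$ to a product-preserving cohomological functor on the full category, and for this the finiteness of $F$ (its values lie in $\underline{\mathrm{mod}}_{K}$) is what makes the relevant $\varprojlim^{1}$ terms controllable. The genuinely delicate issue is that a self-transformation of the restriction $\mathrm{Mor}_{D(\mathscr{A})}(-,\bar T)|_{D(\mathscr{A})^{c}}$ need not lift uniquely to an endomorphism of $\bar T$, the obstruction being phantom maps, so that a priori $\sigma$ only determines a map into the endomorphisms of the restricted functor. Working with the extended functor $\hat F$ on all of $D(\mathscr{A})$, where Yoneda is faithful and the phantom ambiguity is absent, is exactly what turns $\sigma$ into an honest ring homomorphism $\rho$ and thereby produces a legitimate object of $D(\mathscr{A})_{K}$.
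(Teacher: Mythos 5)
Your overall skeleton --- Brown representability to produce an object of $D(\mathscr{A})$, Yoneda on the big category (where phantom-map ambiguity disappears) to transport the $K$-action, then the $(-\otimes K)\dashv\mathrm{Forget}$ adjunction to verify the representing formula --- is exactly the skeleton of the paper's proof, and your diagnosis that the $K$-action must be read off from the \emph{extended} functor rather than from its restriction to compacts is correct and matches what the paper does. But there is a genuine gap at precisely the step you yourself flag as the main obstacle: you never construct the extension $\hat F$ of $\bar F$ to a cohomological functor on all of $D(\mathscr{A})$ sending coproducts to products, and this is the one step that requires an actual idea. A contravariant functor cannot simply be Kan-extended from the compacts: the left Kan extension (a colimit over maps $C\to B$ with $B$ compact) is taken over a comma category with too few objects and is not cohomological, while the right Kan extension (a limit over maps $B\to C$) fails to be exact because limits are not exact, and neither has any reason to take coproducts to products. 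Your appeal to finiteness of the values as ``controlling the $\varprojlim^{1}$ terms'' is not the actual mechanism and does not produce the extension.

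The device you are missing is double dualization. The paper sets $G=D\circ F$, where $D$ is $K$-linear duality, so that $G$ is \emph{covariant} and exact; it then takes the left Kan extension $\tilde G(C)=\mathrm{colim}_{B\to C,\ B\in D(\mathscr{A})^{c}}\,G(B)$, which is homological and commutes with coproducts because it is a colimit construction; finally $D\circ\tilde G$ is contravariant, cohomological, and takes coproducts to products, so Brown representability applies to \emph{it}. Finiteness of the values of $F$ enters at the opposite end of the argument from where you placed it: it guarantees $D\circ D\circ F\cong F$ on compacts, so that the represented functor $D\circ\tilde G$ really restricts to $F$. Note also that the paper applies Brown representability inside the localizing subcategory $\mathscr{T}$ generated by the compacts, which is compactly generated by construction; your proposal instead assumes $D(\mathscr{A})$ itself is compactly generated, an unnecessary restriction (harmless in the intended application to $\mathrm{QCoh}(X)$, but a loss of generality for the theorem as stated). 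With the dualize/Kan-extend/dualize construction inserted, the rest of your argument --- transporting $\sigma$ to $\rho$ via Yoneda and the adjunction computation with its $K$-linearity check --- goes through essentially as in the paper.
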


To prove this we will use the ideas from \cite[Lemma 2.14]{failure} where the version of this theorem with $k=K$ has been proved for a general triangulated category.

\begin{proof}[Proof of theorem \ref{representable}]
Let $D$ be the functor taking a $K$-vector space to its dual. Then $G=D\circ F$ is exact and covariant. Let $\mathscr{T}$ be the cocomplete triangulated subcategory generated by $D(\mathscr{A})^{c}$, i.e. the smallest full triangulated subcategory of $D(\mathscr{A})$ containing $D(\mathscr{A})^{c}$ which is closed under colimits.

Let $\tilde{G}:\mathscr{T}\ra\underline{\mathrm{Mod}}_{K}$ be the Kan extension of $G$ to $\mathscr{T}$: this is defined as
$$\tilde{G}(C)=\mathop{\mathrm{colim}}_{\substack{B \to C\\ B\in D(\mathscr{A})^{c}}} G(B)$$
Since $\tilde{G}$ is exact and commutes with coproducts, it follows that $D\circ\tilde{G}$ is exact and takes coproducts to products. Hence by the Brown representability theorem \cite[Theorem 8.3.3]{tricat} the functor $D\circ\tilde{G}$ is representable, as a functor to $\underline{\mathrm{Mod}}_{k}$, by an object $U\in \mathscr{T}\subset D(\mathscr{A})$.

%\cite[Theorem 3.1] {brown} only works in the case when A is Grothendieck!

The $K$-action on $\underline{\mathrm{Mod}}_{K}$ induces a $K$-action $\tilde{\rho}$ on $D\circ\tilde{G}=h_{U}$, hence by Yoneda we get a $K$-action $\rho$ on $U$, given by $K\stackrel{\rho}{\ra}\text{Nat}(h_{U},h_{U})=\text{Aut}(U)$. Therefore we obtain an object $(U,\rho)\in D(\mathscr{A})_{K}$. We need to show that 
$$D\circ\tilde{G}(C)=\text{Mor}_{D(\mathscr{A})_{K}}(C\otimes K,(U,\rho))$$
for all $C\in D(\mathscr{A})^{c}$.

To do so, first of all notice that as $k$-vector spaces
$$D\circ\tilde{G}(C)=\text{Mor}_{D(\mathscr{A})}(C,U)=\text{Mor}_{D(\mathscr{A})_{K}}(C\otimes K,(U,\rho))$$
because $K\otimes_{k}-$ is left adjoint to the functor forgetting the $K$-structure. By our definition of the $K$-action on $\text{Mor}_{D(\mathscr{A})}(C,U)$, this is the same as the $K$-action on $D\circ\tilde{G}(C)$; moreover the $k$-vector space map
\begin{align*}
\text{Mor}_{D(\mathscr{A})}(C,U) &\stackrel{\gamma}{\ra}\text{Mor}_{D(\mathscr{A})_{K}}(C\otimes K,(U,\rho))\\
f &\mapsto f\otimes \rho
\end{align*}
is compatible with the $K$-action since, for any $\alpha\in K$,
$$\gamma(\alpha\cdot f)=\gamma(\tilde{\rho}(\alpha)f)=\tilde{\rho}(\alpha)f\otimes\rho(\cdot)=f\otimes\rho(\alpha)\rho(\cdot)=\alpha\cdot(f\otimes\rho(\cdot))$$
hence we found that the two actions coincide and so 
$$D\circ\tilde{G}(C)=Mor_{D(\mathscr{A})_{K}}(C\otimes K,(U,\rho))$$

Let $T=(U,\rho)$. Now since $F$ is of finite type, we get
$$F(C)=(D\circ D\circ F)(C)=(D\circ G)(C)=(D\circ\tilde{G})(C)=Mor_{D(\mathscr{A})_{K}}(C\otimes K,T)$$
\end{proof}

%%%%%%%%%%%%%%%%%%%%%%%%%%%%%%%%%%%%
\begin{lemma}\label{beilinson}
Let $k$ and $K$ be two fields, $k\hookrightarrow K$.

Consider the equivalence of categories 
$$D^{b}(\underline{\mathrm{mod}}(\Lambda))\xrightarrow{\theta} D^{b}(\mathrm{Coh}(\pr^{n}_{k}))$$
as described in \cite{beilinson}.

Then there is also an equivalence of categories 
$$D^{b}(\underline{\mathrm{mod}}(\Lambda\otimes K))\xrightarrow{\theta_{K}} D^{b}(\mathrm{Coh}(\pr^{n}_{K}))$$
and the diagram
$$\xymatrix{
D^{b}(\underline{\mathrm{mod}}(\Lambda)) \ar[d] \ar[r]^-{\theta} & D^{b}(\mathrm{Coh}(\pr^{n}_{k}))\ar[d]\\
D^{b}(\underline{\mathrm{mod}}(\Lambda\otimes K)) \ar[r]^-{\theta_{K}} & D^{b}(\mathrm{Coh}(\pr^{n}_{K}))
}$$
is commutative.
\end{lemma}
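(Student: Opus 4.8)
The plan is to observe that the Beilinson equivalence $\theta$ is constructed from an explicit tilting object on $\pr^n_k$ whose endomorphism algebra is $\Lambda$, and that this whole construction is compatible with flat base change $k \to K$. First I would recall the concrete form of the equivalence: the object $T = \bigoplus_{i=0}^{n} \Ol_{\pr^n_k}(i)$ is a tilting generator of $D^b(\mathrm{Coh}(\pr^n_k))$ with $\mathrm{End}(T) = \Lambda$, and $\theta$ is given by (the derived functor of) $\mathrm{RHom}(T, -)$, with quasi-inverse $- \otimes^L_{\Lambda} T$. The key point is that $\Lambda \otimes_k K$ is precisely the endomorphism algebra of the corresponding tilting object $T_K = \bigoplus_{i=0}^n \Ol_{\pr^n_K}(i)$ on $\pr^n_K$, since $\mathrm{End}(T_K) = \mathrm{End}(T) \otimes_k K$ because cohomology of line bundles on projective space commutes with the flat base change $j: \pr^n_K \to \pr^n_k$. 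Hence $\theta_K$ is defined in exactly the same way as $\theta$, just over $K$, and by the same Beilinson argument it is an equivalence.

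Next I would verify the commutativity of the square. Here the vertical arrows are the base-change functors: on the left, $-\otimes_k K$ (equivalently $\underline{\mathrm{mod}}(\Lambda) \to \underline{\mathrm{mod}}(\Lambda \otimes K)$ sending $M \mapsto M \otimes_k K = M \otimes_\Lambda (\Lambda \otimes K)$), and on the right the pullback $Lj^*$ along $\pr^n_K \to \pr^n_k$. The compatibility then reduces to the statement
$$
Lj^*\bigl(M \otimes^L_{\Lambda} T\bigr) \;\cong\; (M \otimes_k K) \otimes^L_{\Lambda \otimes K} T_K,
$$
which is a projection-formula / base-change identity: since $Lj^* T = T_K$ (line bundles pull back to line bundles, and $j$ is flat so there is no higher $Lj^*$) and $Lj^*$ is monoidal, pulling back the two-sided bar-resolution computing $M \otimes^L_\Lambda T$ gives the bar-resolution over $\Lambda \otimes K$ computing the right-hand side. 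I would phrase this by checking it on the generators $\Ol(i)$ (where both sides visibly agree) and extending by triangulatedness and cocontinuity.

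The main obstacle, such as it is, will be organizing the base-change bookkeeping cleanly rather than any deep difficulty: one must be careful that the vertical functors on the two sides really do correspond under the identifications supplied by Lemma \ref{hartshorne}, so that ``$-\otimes K$ on modules'' matches ``$Lj^*$ on sheaves'' through the equivalences $\theta, \theta_K$. Concretely, I expect to invoke Lemma \ref{hartshorne} to identify $Lj^*$ with $-\otimes K$ and then trace a single generating object through the diagram, since once the square commutes on the tilting generators $\Ol(i)$ (equivalently on the projective $\Lambda$-modules $\Lambda e_i$), commutativity on all of $D^b(\underline{\mathrm{mod}}(\Lambda))$ follows because every object is built from these by finitely many cones and shifts, and all four functors are exact. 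Thus the proof is essentially a formal consequence of the fact that Beilinson's tilting construction is defined by base-changeable data.
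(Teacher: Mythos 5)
Your proposal is correct and follows essentially the same route as the paper: identify $\mathrm{End}_{\pr^{n}_{K}}\bigl(\bigoplus_{i}\Ol_{\pr^{n}_{K}}(i)\bigr)$ with $\Lambda\otimes K$ (the paper does this by the explicit computation $\mathrm{Hom}(\Ol(i),\Ol(j))=k[x_{0},\ldots,x_{n}]_{j-i}\otimes K$, you by flat base change for cohomology, which amounts to the same thing), and then verify commutativity of the square via compatibility of the functor $-\otimes_{\Lambda}\mathcal{M}$ with base change (the paper checks this at the level of abelian categories and derives; your bar-resolution/generators argument is the derived-level version of the same check). The only slip is that you label $\mathrm{RHom}(T,-)$ as $\theta$, whereas with the paper's orientation $\theta$ is $-\otimes^{L}_{\Lambda}\mathcal{M}$ and $\mathrm{RHom}(T,-)$ is its quasi-inverse; this is immaterial to the argument.
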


\begin{proof}
By \cite{beilinson}, we have $\Lambda=\text{End}(\mathcal{M})$ where $\mathcal{M}=\bigoplus_{i=0}^{n}\Ol_{\pr^{n}_{k}}(i)$. Set $\mathcal{M}_{K}=\bigoplus_{i=0}^{n}\Ol_{\pr^{n}_{K}}(i)$, then
\begin{align*}
\text{End}_{\pr^{n}_{K}}(\mathcal{M}_{K}) &=\text{End}_{\pr^{n}_{K}}\left(\bigoplus_{i=0}^{n}\Ol_{\pr^{n}_{K}}(i)\right) =\bigoplus_{i,j=0}^{n}\text{End}_{\pr^{n}_{K}}\left(\Ol_{\pr^{n}_{K}}(i),\Ol_{\pr^{n}_{K}}(j)\right) = \\
&=\bigoplus_{i,j=0}^{n} K[x_{0},\ldots,x_{n}]_{j-i}=\bigoplus_{i,j=0}^{n} k[x_{0},\ldots,x_{n}]_{j-i}\otimes K \\
&=\left(\bigoplus_{i,j=0}^{n} k[x_{0},\ldots,x_{n}]_{j-i}\right)\otimes K=\Lambda \otimes K
\end{align*}

Moreover, the equivalence $\theta$ is induced by the map
$$\underline{\mathrm{mod}}(\Lambda)\xrightarrow{-\otimes_{\Lambda}\mathcal{M}} \text{Coh}(\pr^{n}_{k})$$
and if we let $h:\pr^{n}_{K}\ra \pr^{n}_{k}$ be the base change morphism, we obtain the following commutative diagram:
$$\xymatrix{
\underline{\mathrm{mod}}(\Lambda) \ar[rr]^{-\otimes_{\Lambda}\mathcal{M}} \ar[d]_{\otimes K}& & \text{Coh}(\pr^{n}_{k}) \ar[d]^{h^{*}} \\
\underline{\mathrm{mod}}(\Lambda\otimes K)\ar[rr]^{-\otimes_{\Lambda\otimes K}\mathcal{M_{K}} }& &\text{Coh}(\pr^{n}_{K})
}$$
this proves the last assertion.
\end{proof}

%%%%%%%%%%%%%%%%%%%%%%%%%%

We are now almost ready to prove Theorem \ref{ebounded}, but first we will prove the version of the theorem for the purely transcendental case. The following proof uses ideas from \cite[Theorem A.1]{generators}.

\begin{theorem}
\label{transcendental}
Let $X$ be a smooth projective variety over a field $k$. Let $K=k(T)$ or $K=k(T,T')$. Consider a contravariant, cohomological, finite type functor
$$H:D^{b}_{\mathrm{Coh}}(X)\ra \underline{\mathrm{mod}}_{K}$$
Then the complex $T$ of Theorem \ref{representable} lifts to a complex $S\in D^{b}_{\mathrm{Coh}}(X_{K})$ such that $H$ is representable by $S$, i.e. for every $C\in D^{b}_{\mathrm{Coh}}(X)$ we have
$$H(C)=Mor_{D^{b}_{\mathrm{Coh}}(X_{K})}(Lj^{*}C,S)$$
where $j:X_{K}\ra X$ is the base change morphism.
\end{theorem}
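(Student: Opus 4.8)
The plan is to construct the representing object in three stages: first represent $H$ by an object of the base-change category built on all of $\mathrm{QCoh}(X)$, then descend that object along $j$ to $D_{\mathrm{QCoh}}(X_{K})$, and finally show that the descended complex already lies in $D^{b}_{\mathrm{Coh}}(X_{K})$.

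First I would apply Theorem \ref{representable} with $\mathcal{A}=\mathrm{QCoh}(X)$. Since $X$ is smooth and projective, $D_{\mathrm{QCoh}}(X)\cong D(\mathrm{QCoh}(X))$ and its compact objects are exactly the perfect complexes, which coincide with $D^{b}_{\mathrm{Coh}}(X)$. Thus $H$ is an exact contravariant finite-type functor on $D(\mathrm{QCoh}(X))^{c}$, and Theorem \ref{representable} produces an object $T\in (D_{\mathrm{QCoh}}(X))_{K}$ with
$$H(C)=\mathrm{Mor}_{(D_{\mathrm{QCoh}}(X))_{K}}(C\otimes K,\,T)\qquad\text{for all }C\in D^{b}_{\mathrm{Coh}}(X).$$
Next I would invoke Corollary \ref{lift}: because $K=k(T)$ or $k(T,T')$, the functor $D_{\mathrm{QCoh}}(X_{K})\to (D_{\mathrm{QCoh}}(X))_{K}$ is essentially surjective, so $T$ lifts to an object $S\in D_{\mathrm{QCoh}}(X_{K})$. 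Lemma \ref{hartshorne} identifies $\mathrm{Forget}$ with $Rj_{*}$ and $-\otimes K$ with $Lj^{*}$, so that $\mathrm{Forget}(T)\cong Rj_{*}S$; combining this with the adjunctions $-\otimes K\dashv\mathrm{Forget}$ and $Lj^{*}\dashv Rj_{*}$ gives
$$H(C)=\mathrm{Mor}_{D_{\mathrm{QCoh}}(X)}(C,\,Rj_{*}S)=\mathrm{Mor}_{D_{\mathrm{QCoh}}(X_{K})}(Lj^{*}C,\,S),$$
with the two $K$-structures agreeing since both are induced by the action $\rho$. This already yields representability by $S$; what remains is to control the complex $S$.

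The real work, and the step I expect to be the main obstacle, is to prove $S\in D^{b}_{\mathrm{Coh}}(X_{K})$, i.e. that $S$ is bounded with coherent cohomology. For this I would follow the strategy of \cite[Theorem A.1]{generators}. Since $X$ is smooth projective, $D^{b}_{\mathrm{Coh}}(X)$ admits a classical generator $G$; and since $X_{K}=X\times_{k}K$ is again smooth projective over $K$, we have $D^{b}_{\mathrm{Coh}}(X_{K})=D_{\mathrm{QCoh}}(X_{K})^{c}$, with $Lj^{*}G$ a generator (flatness of $j$ makes $Lj^{*}$ exact and its essential image generating). The finite-type hypothesis says $\bigoplus_{n}H(G[n])$ is finite-dimensional over $K$, which through the representation formula reads
$$\bigoplus_{n\in\Z}\mathrm{Mor}_{D_{\mathrm{QCoh}}(X_{K})}(Lj^{*}G,\,S[n])\ \text{ is finite-dimensional over }K.$$
The vanishing of these groups for $|n|\gg 0$ forces $S$ to be cohomologically bounded, while their finite-dimensionality, together with the fact that $Lj^{*}G$ generates, forces $S$ into the thick subcategory it generates, i.e. $S$ is compact and hence $S\in D^{b}_{\mathrm{Coh}}(X_{K})$.

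The two delicate points in this last stage are: establishing that $Lj^{*}G$ genuinely generates $D^{b}_{\mathrm{Coh}}(X_{K})$ after base change along the transcendental extension $K/k$; and, more seriously, upgrading the homological finiteness of the functor $\mathrm{Mor}(Lj^{*}G,\,S[\bullet])$ to compactness of $S$ itself. This latter implication is the converse of the easy direction (compact objects give finite-type functors) and is exactly the content carried over from \cite[Theorem A.1]{generators}; I expect the bulk of the argument to be in verifying that the boundedness and finite-generation estimates survive the passage to $X_{K}$.
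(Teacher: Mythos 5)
Your proposal is correct, and for the representability part it coincides with the paper's own proof: applying Theorem \ref{representable} to $\mathcal{A}=\mathrm{QCoh}(X)$, lifting the resulting $T$ to $S\in D_{\mathrm{QCoh}}(X_{K})$ by Corollary \ref{lift}, and identifying $H(C)\cong \mathrm{Mor}_{D_{\mathrm{QCoh}}(X_{K})}(Lj^{*}C,S)$ via Lemma \ref{hartshorne} together with the two adjunctions is exactly the paper's argument. Where you genuinely diverge is the last step, showing $S\in D^{b}_{\mathrm{Coh}}(X_{K})$. The paper does not quote \cite[Theorem A.1]{generators} as a black box; it re-runs that theorem's proof in the base-changed setting: choose an embedding $\pi:X\to\pr^{n}_{k}$, observe that $H\circ L\pi^{*}$ is represented by $R\pi_{K*}S$, transport this complex through the Beilinson equivalence $\theta_{K}$ of Lemma \ref{beilinson} to a complex $V$ over the ordinary finite $K$-algebra $\Lambda\otimes K$, and then use finite type of $H$ to get $\sum_{n}\dim \mathrm{Mor}((\Lambda\otimes K)[n],V)<\infty$; since Hom from the free rank-one module computes cohomology, this immediately says $V$ is bounded with finitely generated cohomology, and one transports back. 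Your route instead applies the cited theorem directly to the smooth proper $K$-scheme $X_{K}$, feeding it the finiteness of $\bigoplus_{n}\mathrm{Mor}(Lj^{*}G,S[n])$ for a single generator. This does close, but the two ``delicate points'' you flag are precisely what must be supplied: (a) $Lj^{*}G$ is a compact generator of $D_{\mathrm{QCoh}}(X_{K})$, because $Lj^{*}G$ is perfect and, $j$ being affine, $Rj_{*}$ is conservative, so $\mathrm{Mor}(Lj^{*}G,M[n])=\mathrm{Mor}(G,Rj_{*}M[n])=0$ for all $n$ forces $M=0$; and (b) the cited theorem tests against \emph{all} of $D^{b}_{\mathrm{Coh}}(X_{K})$, not just $Lj^{*}G$, so you must note that the perfect complexes $P$ with $\sum_{i}\dim\mathrm{Hom}(P,S[i])<\infty$ form a thick subcategory (clear from the long exact sequences) and that the thick closure of a compact generator is all of the compacts (Neeman--Thomason); smoothness of $X_{K}$ then identifies compacts with $D^{b}_{\mathrm{Coh}}(X_{K})$. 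Also, your intermediate claim that vanishing of $\mathrm{Mor}(Lj^{*}G,S[n])$ for $|n|\gg 0$ by itself forces cohomological boundedness is not a one-line inference and is best left bundled inside the cited theorem rather than asserted separately. As for what each approach buys: yours is shorter and conceptually clean but outsources the hard implication to the literature plus the generator-transfer bookkeeping above; the paper's is self-contained (modulo Lemma \ref{beilinson}) and makes visible why the transcendental base change causes no trouble, since the Beilinson algebra simply becomes $\Lambda\otimes K$.
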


\begin{proof}
By Lemma \ref{representable}, the functor $H$ is representable by an element $T\in (D_{\mathrm{QCoh}}(X))_{K}$, i.e.
$$H(C)=Mor_{(D_{\mathrm{QCoh}}(X))_{K}}(C\otimes K,T)$$

Let $S$ be a lift of $T$ to $D_{\mathrm{QCoh}}(X_{K})$ (this is possible by Corollary \ref{lift}). Let $C$ be an element of $D^{b}_{\mathrm{Coh}}(X)$. By applying the functors in Lemmas \ref{ess-sur} and \ref{hartshorne} we get a $K$-linear map 

$$Mor_{D_{\mathrm{QCoh}}(X_{K})}(Lj^{*}C,S) \xrightarrow{\Psi(\cdot)} Mor_{(D_{\mathrm{QCoh}}(X))_{K}}(\Psi\circ Lj^{*}C,T)$$
and, since by Lemma \ref{hartshorne}, $\Psi\circ Lj^{*}C=C\otimes K$, we have
$$Mor_{(D_{\mathrm{QCoh}}(X))_{K}}(\Psi\circ Lj^{*}C,T)=Mor_{(D_{\mathrm{QCoh}}(X))_{K}}(C\otimes K,T)=H(C)$$

Hence to show that $H$ is represented by $S$ we just need to show that $\Psi(\cdot)$ is an isomorphism. It suffices to show that it is an isomorphism of $k$-vector spaces, which follows from the following diagram of $k$-vector spaces:
$$\xymatrix{
\mathrm{Mor}_{D_{\mathrm{QCoh}}(X_{K})}(Lj^{*}C,S)  \ar[r]^-{\Psi(\cdot)} \ar@{=}[d]
& \mathrm{Mor}_{(D_{\mathrm{QCoh}}(X))_{K}}(\Psi\circ Lj^{*}C,T) \ar@{=}[d] \\
\mathrm{Mor}_{D_{\mathrm{QCoh}}(X)}(C,Rj_{*}S)   \ar@{=}[d]
& \mathrm{Mor}_{(D_{\mathrm{QCoh}}(X))_{K}}(C\otimes K,T) \ar@{=}[d] \\
\mathrm{Mor}_{D_{\mathrm{QCoh}}(X)}(C,\text{Forget}(\Psi(S))) \ar@{=}[r] 
& \mathrm{Mor}_{(D_{\mathrm{QCoh}}(X))}(C,\text{Forget}(T)) 
}$$
here we used the fact that $Rj_{*}=\text{Forget}\circ \Psi$, again from Lemma \ref{hartshorne}.

So $\Psi(\cdot)$ is an isomorphism, and hence $H$ is represented by $S\in D_{\mathrm{QCoh}}(X_{K})$. We still have to show that $S$ is actually in $D^{b}_{\mathrm{Coh}}(X_{K})$.

Choose an embedding $\pi:X\ra\pr^{n}_{k}$. Let $H'=H\circ L\pi^{*}$. Let $\theta:D^{b}(\underline{\mathrm{mod}}(\Lambda))\ra D^{b}(\text{Coh}(\pr^{n}_{k}))$ and $\theta_{K}:D^{b}(\underline{\mathrm{mod}}(\Lambda\otimes K))\ra D^{b}(\text{Coh}(\pr^{n}_{K}))$ as defined in Lemma \ref{beilinson} above. Let $H''=H'\circ\theta$. Let $h:\pr^{n}_{K}\ra \pr^{n}_{k}$ be the base change morphism.

Consider the following diagram:
$$\xymatrix{
D^{b}(\underline{\mathrm{mod}}(\Lambda)) \ar[r]^-{\theta} \ar@/^4pc/ [rrr] ^{H''} \ar[d]_{-\otimes K}
&D^{b}(\text{Coh}(\pr^{n}_{k})) \ar[r]^-{L\pi^{*}} \ar@/^2pc/ [rr]^-{H'} \ar[d]_-{h^{*}} 
&D^{b}_{\mathrm{Coh}}(X) \ar[r]^-{H} \ar[d]_-{Lj^{*}} 
& \underline{Vect}_{K}\\
D^{b}(\underline{\mathrm{mod}}(\Lambda\otimes K)) \ar[r] ^-{\theta_{K}}  &D^{b}(\text{Coh}(\pr^{n}_{K})) \ar[r]^-{L\pi_{K}^{*}} 
&D^{b}_{\mathrm{Coh}}(X_{K}) 
}$$
and let $C\in D^{b}(\mathrm{Coh}(\pr^{n}_{k}))$.
\begin{align*}
H'(C) &=H(L\pi^{*}(C))=\text{Mor}_{D_{\mathrm{QCoh}}(X_{K})}(Lj^{*}L\pi^{*}C,S)= \\
&=\text{Mor}_{D_{\mathrm{QCoh}}(X_{K})}(L\pi_{K}^{*}h^{*}C,S)=\text{Mor}_{D_{\mathrm{QCoh}}(\pr^{n}_{K})}(h^{*}C,R\pi_{K*}S)
\end{align*}
so $H'$ is represented by $R\pi_{K*}S\in D_{\mathrm{QCoh}}(\pr^{n}_{K})$.

Let $V=\theta_{K}^{-1}(R\pi_{K*}(S))$ so that $H''$ is represented by $V$. Then
$$H''(\Lambda)=\text{Mor}_{\Lambda\otimes K}(\Lambda\otimes K,V)$$
and
\begin{align*} 
\sum_{n}\mbox{dim} H''(\Lambda[n]) &=\sum_{n} \mbox{dim Mor} (\Lambda[n]\otimes K,V)=\\
&=\sum_{n}\mbox{dim Mor} ((\Lambda\otimes K)[n],V)<\infty
\end{align*}
since $H''$ is of finite type. Therefore $V\in D^{b}(\underline{\mathrm{mod}}(\Lambda\otimes K))$.

This implies that $R\pi_{*}S\in D^{b}(\mathrm{Coh}(\pr^{n}_{K}))$ hence $S\in D^{b}(\mathrm{Coh}(X_{K}))$.
\end{proof}

%%%%%%%%%%%%%%%%%%%%%

\begin{proof}[Proof of theorem \ref{ebounded}]
The case where $L$ is purely transcendental of degree 2 over $k$ was treated in Theorem \ref{transcendental}. Let $L$ be a finitely generated separable field extension of $k$ with $\text{trdeg}_{k}L\leq 1$. There exists a field $K$ such that $K$ is a purely transcendental extension of $k$ of degree less than or equal 1, and $K\subset L$ is a finite extension. Set $L=K(\alpha)=K[T]/ P(T)$, where $P(T)$ is a separable polynomial. Consider the composition

$$\xymatrix{
D^{b}_{\mathrm{Coh}}(X) \ar[r]^{H} \ar@/^2pc/ [rr]^{H'} & \underline{\mathrm{mod}}_{L} \ar[r]^{\mathrm{Forget}} &\underline{\mathrm{mod}}_{K}
}$$

By theorem \ref{transcendental}, $H'$ is representable by an object $S\in D^{b}_{\mathrm{Coh}}(X_{K})$. Moreover, by Corollary \ref{lift}, $S$ is endowed with a map $\psi_{\alpha}$ such that $P(\psi_{\alpha})=0$ is zero on all the cohomology groups of $S$. 

First of all, this implies that there exists an $n$ such that $P(\psi_{\alpha})^{n}=0$. In fact, considering the good truncations $\tau_{\leq i}S$,
$$\ldots \to S^{i-1}\ra Z^{i}\to 0\text{ ,}$$ 
the claim follows inductively considering the distinguished triangles
$$\tau_{\leq i-1}S \to \tau_{\leq i}S \to H^{i}(S)[-i]$$
and recalling the fact that if $(f_{1},f_{2},f_{3})$ is a morphism between two triangles and two of the morphisms are nilpotent, then so is the third.

Now let $h:X_{L}\ra X_{K}$ be the base change morphism, and consider the pullback $Lh^{*}S\in D^{b}_{\mathrm{Coh}}(X_{L})$. It has an $L[T]$ action  induced by the morphism $Lh^{*}\psi_{\alpha}$, and $P(Lh^{*}\psi_{\alpha})^{n}=0$ so $Lh^{*}S$ has in fact an $L[T]/P^{n}(T)$-action. But since $P$ is a separable polynomial, the map $L[T]/P^{n}(T)\to L[T]/P(T)$ splits as $L$-algebra map hence $L[T]/P(T)$ also acts on $Lh^{*}S$.

Since, over $L$, $P(T)$ factors as $(T-\alpha)Q(T)$, we can find two elements $e_{1},e_{2}$ of $L[T]/P(T)$ such that  $e_{1}^{2}=e_{1}$, $e_{2}^{2}=e_{2}$, $e_{1}e_{2}=0$, $e_{1}+e_{2}=1$. But since $L[T]/P(T)$ acts on $Lh^{*}S$, this gives two idempotent operators $e_{1},e_{2}$ in $\text{Aut}_{D^{b}_{\mathrm{Coh}}(X_{L})}(Lh^{*}S)$ such that $e_{1}e_{2}=0$, $e_{1}+e_{2}=\text{id}_{Lh^{*}S}$.

Now since $D^{b}_{\mathrm{Coh}}(X_{L})$ is Karoubian by \cite[Proposition 3.2]{karoubian} we have obtained that $Lh^{*}S=E \oplus S_{2}$ and $Lh^{*}\psi_{\alpha}$ acts as multiplication by $\alpha$ on $E$.

We claim that $Rh_{*}E=S$. Consider the map
$$S \rightarrow Rh_{*}Lh^{*} S\xrightarrow{pr_{1}} Rh_{*}E$$
Under the identification $D_{\mathrm{QCoh}}(X_{L}) \stackrel{\Psi}{\longrightarrow} D(\text{QCoh}(X)_{L})$ this corresponds to $S\rightarrow S\otimes L\rightarrow \text{Forget} (E)$, so this is actually the identity map on $S$.

Then for every $C\in D^{b}_{\mathrm{Coh}}(X)$ we have a map of $L$-vector spaces
$$\text{Mor}_{D_{\mathrm{QCoh}}(X_{L})} (Lj^{*}C,E)\rightarrow \text{Mor}_{(D_{\mathrm{QCoh}}(X))_{L}}(C\otimes L,T)=H(C)$$
where $j:X_{L}\to X$ is the base change morphism, since $E$ is a lift of $S$ to $D^{b}_{\mathrm{Coh}}(X_{L})$ with the correct $L$-action. This map is an isomorphism because it is an isomorphism of $K$-vector spaces:
\begin{align*}
\text{Mor}_{D_{\mathrm{QCoh}}(X_{L})} (Lj^{*}C,E) &=\text{Mor}_{(D_{\mathrm{QCoh}}(X_{K}))}(Li^{*}C,Rh_{*}E) \\
&=\text{Mor}_{(D_{\mathrm{QCoh}}(X_{K}))}(Li^{*}C,S)=H'(C)
\end{align*}
where $i:X_{K}\to X$ is the base change morphism.
\end{proof}

%%%%%%%%%%%%%%%%%%%%%%%%%%%%%%%%%

\begin{proof}[Proof of theorem \ref{FM}]
Consider the composition
$$\xymatrix{
D^{b}_{\mathrm{Coh}}(X) \ar@/_2
pc/ [rrrr]_{H}  \ar[r]^-{F} & D^{b}_{\mathrm{Coh}}(Y) \ar[r]^-{i^{*}} & D^{b}_{\mathrm{Coh}}(\eta) \ar[r]^{H^{0}} & \underline{\mathrm{mod}}_{K(Y)} \ar[r]^{D} &\underline{\mathrm{mod}}_{K(Y)}
}$$

where $H^{0}(-)=H^{0}(\eta,-)$ and $D$ is the dual as $K(Y)$-vector space. $H$ is an exact contravariant finite type functor, hence by theorem \ref{ebounded} it is representable by $E\in D^{b}_{\text{Coh}}(X_{K(Y)})$.

Now consider the following diagram:
$$\xymatrix{
X_{K(Y)} \ar[r]^{p} \ar[d]^{g} \ar@/_2pc/[dd]_{j} &\eta \ar[d]^{i} \\
X\times Y \ar[r]^-{\pi_{2}} \ar[d]^{\pi_{1}} & Y \ar[d]\\
X\ar[r] &\text{Spec }k
}$$

Both squares are cartesian because $X_{K(Y)}=X\times_{\text{Spec }k} K(Y)=X\times Y \times_{Y}\eta$. Note that $g$ is a flat map, so the derived pullback is just regular pullback in every degree. Also, $g$ is an affine map so that pushforward is also exact. 

%$g$ is  flat because $i$ is: the inclusion of an integral domain in its quotient field is flat. $i$ is affine because it is given by the composition $\eta \to \eta\times Y \to Y$ and the first map is the base change of $Y\xrightarrow{\Delta}Y\times Y$, which is affine since $Y$ is separated, whereas the second map is the base change of $\eta \to Spec k$ and this is affine since $\eta$ is affine.

Let $E^{\vee}=\underline{\mathrm{RHom}}_{X_{K(Y)}}(E,\Ol_{X_{K(Y)}})$. Let us construct a complex $A\in D^{b}_{\mathrm{Coh}}(X\times Y)$ such that $Lg^{*}A=E^{\vee}\otimes \omega_{X_{K(Y)}}[\mbox{dim}X_{K(Y)}]$. 

Let $\mathcal{L}\in \text{Coh}(X\times Y)$ be a line bundle such that $E^{\vee}\otimes \omega_{X_{K(Y)}}[\mbox{dim}X_{K(Y)}]\otimes g^{*}\mathcal{L}^{\otimes n} \in D^{b}_{\text{Coh}}(X_{K(Y)})$ is generated by its global sections in each degree. Let $\{s_{i,\ell}\}$ be a set of generators in degree $\ell$. Consider the complex $g_{*}(E^{\vee}\otimes \omega_{X_{K(Y)}}[\mbox{dim}X_{K(Y)}]\otimes g^{*}\mathcal{L}^{\otimes n})$ on $D^{b}_{\mathrm{Coh}}(X\times Y)$. Then take the subcomplex  generated in each degree by $\{g_{*}s_{i,\ell}\} \cup \{g_{*}d s_{i,\ell-1}\} $, and twist it down by $\mathcal{L}^{-n}$. This gives the desired complex $A\in D^{b}_{\mathrm{Coh}}(X\times Y)$.

Then we get the following:
\begin{align*}
H^{0}\circ i^{*}\circ\Phi_{A}(C) &= H^{0} i^{*} R\pi_{2*}(A\otimes \pi_{1}^{*}C)\\
&=H^{0}Rp_{*} (g^{*}A\otimes g^{*} \pi_{1}^{*}C) \quad  \mbox{ (by flat base change)}\\
&= H^{0} Rp_{*} (E^{\vee}\otimes \omega_{X_{K(Y)}}[\mbox{dim }X_{K(Y)}]\otimes j^{*}C) \\ 
&= \mbox{Mor}(\Ol_{\eta},Rp_{*}(E^{\vee}\otimes \omega_{X_{K(Y)}}[\mbox{dim }X_{K(Y)}]\otimes j^{*}C)) \\
&= \mbox{Mor}(p^{*}\Ol_{\eta},E^{\vee}\otimes \omega_{X_{K(Y)}}[\mbox{dim }X_{K(Y)}]\otimes j^{*}C) \\
&=\mbox{Mor}(\Ol_{X_{K(Y)}},E^{\vee}\otimes \omega_{X_{K(Y)}}[\mbox{dim }X_{K(Y)}]\otimes j^{*}C) \\
&=\mbox{Mor}(E, \omega_{X_{K(Y)}}[\mbox{dim }X_{K(Y)}]\otimes j^{*}C) \\
&= D\circ \mbox{Mor} (j^{*}C,E) \\
&= D\circ H(C) \\
&= H^{0}\circ i^{*}\circ F(C)
\end{align*}
for every $C\in D^{b}_{\mathrm{Coh}}(X)$.

Now since $F$ is an exact functor,
\begin{align*}
H^{i}\circ i^{*}\circ F© &= H^{0} ( i^{*}\circ F(C) [i])=
H^{0} ( i^{*}\circ F(C[i]) )=\\
&=H^{0} ( i^{*}\circ\Phi_{A}(C[i]) )=
H^{0} ( i^{*}\circ\Phi_{A}(C) [i])=\\
&=H^{i}\circ i^{*}\circ\Phi_{A}(C) 
\end{align*}

Hence, since all cohomology groups agree and $D^{b}_{\mathrm{Coh}}(K(Y))$ is equivalent to the category of graded vector spaces over $K(Y)$, $F$ and $\Phi_{A}$ agree after restricting to the generic point of $Y$.
\end{proof}

\bibliography{refs}{}
\bibliographystyle{amsalpha} 

\end{document}